\newcommand{\vertiii}[1]{{\left\vert\kern-0.25ex\left\vert\kern-0.25ex\left\vert #1
\right\vert\kern-0.25ex\right\vert\kern-0.25ex\right\vert}}
\begin{document}

\title{Generalized Permutants and Graph GENEOs}

\titlerunning{Graph GENEOs}

\author{Faraz~Ahmad $^1$,
        Massimo~Ferri $^1$, Patrizio~Frosini $^1$}
        
\authorrunning{F.~Ahmad, M.~Ferri, P.~Frosini}

\institute{$^1$ ARCES and Dept. of Mathematics, Univ. of Bologna, Italy\\
\email{\{faraz.ahmad2, massimo.ferri, patrizio.frosini\}@unibo.it}
}

\maketitle

\begin{abstract}
In this paper we establish a bridge between Topological Data Analysis and Geometric Deep Learning, adapting the topological theory of group equivariant non-expansive operators (GENEOs) to act on the space of all graphs weighted on vertices or edges. This is done by showing how the general concept of GENEO can be used to transform graphs and to give information about their structure. This requires the introduction of the new concepts of generalized permutant and generalized permutant measure and the mathematical proof that these concepts allow us to build GENEOs between graphs. An experimental section concludes the paper, illustrating the possible use of our operators to extract information from graphs. This paper is part of a line of research devoted to developing a compositional and geometric theory of GENEOs for Geometric Deep Learning.
\end{abstract}

\begin{keywords}
Perception pair, GENEO, permutant, weighted graphs.
\end{keywords}


\section{Introduction}
\label{intro}

In recent years, the need for an extension of Deep Learning to non-Euclidean domains has led to the development of Geometric Deep Learning (GDL)~\cite{BBLSV17,BrBrCoVe21,bronstein_2022}.
This line of research focuses on applying neural networks on manifolds and graphs, so making available new geometric models for artificial intelligence.
In doing that, GDL uses techniques coming from differential geometry, combinatorics, and algebra. In particular, it largely uses the concepts of group action and
equivariant operator~\cite{AERP19,AnRoPo16,Bengio2013,cohen2016group,Ma12,Ma16,worrall2017harmonic,ZVERP15}, which allow for a strong reduction in the number of parameters involved in machine learning.

Topological Data Analysis (TDA)~\cite{EdLeZo02,CoEdHa07,Car09,EH10,EdMo13} is giving a contribution to the development of GDL, grounding on the use of \emph{non-expansive} equivariant operators~\cite{BFGQ19}. The main idea is to benefit from classical and new results of TDA to study the ``shape'' of the spaces of equivariant operators by employing suitable topologies and metrics. The topological, geometric, and algebraic properties of these spaces have indeed a relevant impact on the identification of the operators that are more efficient for our application purposes.
We stress that the assumption of non-expansivity is fundamental in this framework, since it guarantees that the space of group equivariant non-expansive operators (GENEOs) is compact (and hence finitely approximable), provided that the data space is compact for a suitable topology~\cite{BFGQ19}. We also observe that, from a practical point of view, non-expansivity can be seen as the property of simplifying the metric structure of data. While particular applications may require locally violating this property, we remark that the usual long-term purpose of observers is the one of simplifying the available information by representing it in a much simpler and more meaningful way.

The approach based on TDA has allowed us to start shaping a compositional and topological theory for GENEOs. In particular,
it has been proved that some operations are available to combine GENEOs and obtain other GENEOs, including composition, convex combination, minimization, maximization, and direct product. The compositional theory based on such operations leads us to think of GENEOs as elementary components that could be used to replace neurons in neural networks~\cite{CoFrQu22}.
This new kind of network could be much more transparent in its behavior, because of the intrinsic interpretability of its components.
Modularity is indeed a key tool for interpretability in machine learning, since it can make clear which processes control the behavior of artificial agents. The attention to this property corresponds to the rising interest in the so-called ``explainable deep learning''~\cite{Rudin2019,RoBoDuGa20,longo2020explainable}.

To use GENEOs in applications, we need methods to build such operators for the transformation groups we are interested in.
If we restrict our attention to linear operators, a constructive procedure is available for the case that the functions representing our data have a finite domain $X$. This procedure is based on the concept of ``permutant'', i.e., a set of permutations of $X$ that is invariant under the conjugation action of the equivariance group $G$ we are considering~\cite{CoFrQu22}. While the classical way of building equivariant operators requires integration on the (possibly large) group $G$~\cite{pmlr-v80-kondor18a}, this construction method may be based on a simpler sum computed on a small permutant. We can prove that any linear GENEO can be obtained as a weighted arithmetic mean related to a suitable permutant, provided that the domains of the signals are finite and the equivariance groups transitively act on those domains~\cite{BBBFQ20}. By replacing the weighted arithmetic mean with other normalized symmetric functions, the method based on permutants can be easily extended to the construction of non-linear GENEOs~\cite{CoFrQu22}.

In this paper we explore the possibility of applying the previous ideas to graphs weighted on vertexes or edges, focusing on the important role that these graphs have in GDL~\cite{BBLSV17}. To do that, we illustrate a way of using GENEOs with graphs, defining the concept of \emph{graph GENEO}. Moreover, we extend the definition of permutant to the one of generalized permutant, showing how this new concept allows us to build GENEOs between graphs. Our final purpose is the one of developing a new technique to build operators that can transform graphs according to our needs, and make these operators available as components for application in GDL.

This is the outline of our paper. In Section~\ref{STS} we recall our mathematical setting, based on the concepts of perception pair, GENEO, and permutant. Section~\ref{GPSTS}  introduces and describes the new concepts of generalized permutant and generalized permutant measure, proving that each of them can be used to build a GENEO (Theorems~\ref{GenPermGOTop} and ~\ref{GenPermMeasThm}). In Section~\ref{GraphModel} we introduce the concepts of vertex-weighted/edge-weighted graph GENEO and illustrate our new mathematical model with several examples. A section devoted to experiments (Section~\ref{Experiments}) concludes the paper, showing how graph GENEOs allow us to extract useful information from graphs.

\section{The Set-theoretical Setting}\label{STS}

Let $X$ be a non-empty set, $\Phi$ be a subspace of \[\mathbb{R}_b^X := \{ \varphi : X \to \mathbb{R} \mid \varphi \hspace{0.1cm} \text{is bounded} \}\] endowed with the topology induced by the $L^\infty$ distance 
\[    D_\Phi(\varphi_1, \varphi_2)  := \| \varphi_1 - \varphi_2 \|_\infty = \sup_{x \in X} \lvert\varphi_1(x) - \varphi_2(x)\rvert, \hspace{0.1cm} \varphi_1, \varphi_2 \in \Phi \]
and $G$ be a subgroup of 
\[  \text{Aut}_{\Phi}(X) := \{  g : X \to X \hspace{0.1cm} \mid \text{g is bijective and  }  \varphi \circ g, \varphi \circ g^{-1} \in \Phi, \forall \varphi \in \Phi \} \]

with respect to the composition of functions. 

\begin{definition}\label{DefPercPrTop}
We say that $(\Phi, G)$ is a \textit{perception pair}.
\end{definition}

The elements $\varphi$ of $\Phi$ are often called \textit{measurements}. The fact that $X$ is the common domain of all maps in $\Phi$ will be expressed as dom$(\Phi)=X$.

The space $\Phi$ of measurements endows $X$ and $\text{Aut}_{\Phi}(X)$ (and therefore every subgroup $G$ of $\text{Aut}_{\Phi}(X)$) with topologies induced respectively by the extended pseudometrics \[ D_X(x_1, x_2) := \sup\limits_{\varphi\in\Phi}\lvert\varphi(x_1)-\varphi(x_2)\rvert, x_1, x_2 \in X \] and 
\[D_{\text{Aut}}(f, g) := \sup_{\varphi \in \Phi} D_{\Phi}(\varphi \circ f, \varphi \circ g), \hspace{0.1cm} f, g \in \text{Aut}_{\Phi}(X). \]

It is known that each $g \in \mathrm{Aut}_{\Phi}(X)$ is an isometry of $X$ \cite{BFGQ19}.

\begin{definition}\label{DefGENEOTop}
Let $(\Phi, G)$ and $(\Psi, K)$ be perception pairs with $\text{dom}(\Phi) = X$ and $\text{dom}(\Psi) = Y$, and $T : G \to K$ be a group homomorphism. An operator $F : \Phi \to \Psi$ is said to be a \textit{group equivariant non-expansive operator} (GENEO, for short) \textit{from $(\Phi, G)$ to $(\Psi, K)$ with respect to} $T$ if \[ F(\varphi \circ g) = F(\varphi) \circ T(g), \hspace{0.1cm} \varphi \in \Phi, g \in G, \] and \[ \| F(\varphi_1) - F(\varphi_2) \|_\infty \leq \|\varphi_1 - \varphi_2\|_\infty, \hspace{0.1cm} \varphi_1, \varphi_2 \in \Phi. \]
\end{definition}

For the sake of conciseness, we often write a GENEO as $(F, T) : (\Phi, G) \to (\Psi, K)$.

An operator that satisfies the first condition in this definition is called a \textit{group equivariant operator} (GEO, for short), while one satisfying the second condition is said to be \textit{non-expansive}.

The set $\mathcal{F}_T^{\text{all}}$ of all GENEOs $(F, T) : (\Phi, G) \to (\Psi, K)$, with respect to a fixed homomorphism $T$, is a metric space with the distance function given by 
\[
    D_{\text{GENEO}}(F_1, F_2)  := \sup_{\varphi \in \Phi} D_{\Psi}(F_1(\varphi), F_2(\varphi)),  \hspace{0.1cm} F_1, F_2 \in \mathcal{F}_T^{\text{all}}.
\]

A method to build GENEOs by means of the concept of a permutant is illustrated in \cite{CoFrQu22}. If $G$ is a subgroup of $\mathrm{Aut}_{\Phi}(X)$, then the conjugation map \[ \alpha_g \colon \mathrm{Aut}_{\Phi}(X) \to \mathrm{Aut}_{\Phi}(X), \] given by $f \mapsto g \circ f \circ g^{-1}$, $g \in G$, plays a key role in this technique.

\begin{definition}\label{DefPermTop}
Let $H$ be a finite subset of $\mathrm{Aut}_{\Phi}(X)$. We say that $H$ is a \textit{permutant for $G$} if $H=\emptyset$ or $\alpha_g (H) \subseteq H$ for every $g \in G$; i.e., $\alpha_g (f) = g \circ f \circ g^{-1} \in H$ for every $f \in H$ and $g \in G$.
\end{definition}

\begin{example}\label{ex_uno}
Let $\Phi$ be the set of all functions $\varphi:X=S^1=\{(x,y)\in\mathbb{R}^2 \mid x^2 + y^2 = 1\}\to [0,1]$ that are non-expansive with respect to the Euclidean distances on $S^1$ and $[0,1]$.
Let us consider the group $G$ of all isometries of $\mathbb{R}^2$, restricted to $S^1$. If $h$ is the clockwise rotation of $\ell$ radians for a fixed $\ell\in\mathbb{R}$, then
the set $H=\{h, h^{-1}\}$ is a permutant for $G$.
\end{example}

Other examples of permutants will be given in Example~\ref{PermVM01}, Example~\ref{PermVM02} and Proposition~\ref{SwapGrPermVM}.

We recall the following result.
As usual, in the following we will denote the set of all functions from the set $A$ to the set $B$ by the symbol $B^A$.

\begin{proposition}\label{PermGOTop}
Let $(\Phi, G)$ be a perception pair with $\text{dom}(\Phi) = X$. If $H$ is a nonempty permutant for $G \subseteq \mathrm{Aut}_{\Phi}(X)$, then the restriction to $\Phi$ of the operator $F \colon \mathbb{R}^X \to \mathbb{R}^X$ defined by \[ F(\varphi) := \frac{1}{\lvert H \rvert }\sum_{h\in H} \varphi \circ h \] is a GENEO from $(\Phi, G)$ to $(\Phi, G)$ with respect to $T$, provided that $F(\Phi) \subseteq \Phi$.
\end{proposition}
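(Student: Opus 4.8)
The plan is to verify the two defining properties of a GENEO for $F$ with respect to the identity homomorphism $T = \mathrm{id}_G \colon G \to G$ (the natural choice, since here the source and target pairs coincide), treating the hypothesis $F(\Phi) \subseteq \Phi$ as given so that well-definedness requires no separate argument. The single structural fact I would establish first — and on which everything hinges — is that conjugation by any $g \in G$ does not merely map $H$ into itself but actually \emph{permutes} $H$. Indeed, each $\alpha_g$ is a bijection of $\mathrm{Aut}_\Phi(X)$ (its inverse being $\alpha_{g^{-1}}$), so the inclusion $\alpha_g(H) \subseteq H$ supplied by the permutant hypothesis, together with the finiteness of $H$, forces $\alpha_g(H) = H$. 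Thus $h \mapsto \alpha_g(h) = g \circ h \circ g^{-1}$ is a bijection of the finite set $H$.

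For equivariance, I would start from
\[ F(\varphi \circ g) = \frac{1}{\lvert H \rvert} \sum_{h \in H} \varphi \circ g \circ h \]
and use the elementary identity $g \circ h = (g \circ h \circ g^{-1}) \circ g = \alpha_g(h) \circ g$. Substituting gives $\frac{1}{\lvert H \rvert}\sum_{h \in H} \varphi \circ \alpha_g(h) \circ g$; since $h \mapsto \alpha_g(h)$ ranges bijectively over $H$, reindexing the sum by $h' = \alpha_g(h)$ yields $\frac{1}{\lvert H \rvert}\sum_{h' \in H} \varphi \circ h' \circ g$. Factoring out the composition with $g$ on the right then produces exactly $F(\varphi) \circ g = F(\varphi) \circ T(g)$, as required.

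For non-expansivity, I would bound $\|F(\varphi_1) - F(\varphi_2)\|_\infty$ by pulling the difference through the average and applying the triangle inequality:
\[ \left\| \frac{1}{\lvert H \rvert} \sum_{h \in H} (\varphi_1 - \varphi_2) \circ h \right\|_\infty \le \frac{1}{\lvert H \rvert} \sum_{h \in H} \|(\varphi_1 - \varphi_2) \circ h\|_\infty. \]
Because each $h \in H \subseteq \mathrm{Aut}_\Phi(X)$ is a bijection of $X$, precomposition with $h$ only relabels the points over which the supremum is taken, so $\|(\varphi_1 - \varphi_2) \circ h\|_\infty = \|\varphi_1 - \varphi_2\|_\infty$ for every $h$; the right-hand side therefore collapses to $\|\varphi_1 - \varphi_2\|_\infty$.

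The only genuinely substantive point is the first one — that finiteness upgrades the invariance $\alpha_g(H) \subseteq H$ to the bijectivity $\alpha_g(H) = H$ — since the reindexing step in the equivariance computation is legitimate only for a true permutation of $H$. Everything else is a direct manipulation of finite sums together with the fact, recalled in the excerpt, that the elements of $\mathrm{Aut}_\Phi(X)$ are bijections (indeed isometries) of $X$; in particular no appeal to the topologies $D_X$ or $D_{\mathrm{Aut}}$ is needed, and boundedness of $F(\varphi)$ is subsumed by the standing assumption $F(\Phi)\subseteq\Phi$.
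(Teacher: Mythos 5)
Your proof is correct and follows essentially the same route as the paper: the paper recalls this proposition from the literature and proves it in generalized form as Theorem~\ref{GenPermGOTop}, whose argument is exactly your change of variable $h' = g \circ h \circ T(g^{-1})$ for equivariance and the triangle inequality for non-expansivity, with your case being $T = \mathrm{id}_G$. Your justification that conjugation permutes $H$ (via finiteness and injectivity of $\alpha_g$) is the same observation the paper records after Definition~\ref{DefGenPermTop}, and it is indeed the step on which the reindexing legitimately rests; note only that finiteness is not strictly needed, since $\alpha_{g^{-1}}(H) \subseteq H$ already gives $H \subseteq \alpha_g(H)$.
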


The reader is referred to \cite{NQTh,BBBFQ20,CoFrQu22} for further details.


\section{Generalized Permutants in the Set-theoretical Setting}\label{GPSTS}

In this paper, we introduce a generalization of the concept of a permutant to the case when we may have distinct perception pairs, and show that the new concept we introduce here too can be used to populate the space of GENEOs.

\begin{definition}\label{DefGenPermTop}
Let $(\Phi, G)$, $\mathrm{dom}(\Phi) = X$ and $(\Psi, K)$, $\mathrm{dom}(\Psi) = Y$ be perception pairs and $T : G \to K$ be a group homomorphism. A finite set $H \subseteq X^{Y}$ of functions $h : Y \to X$ is called a \textit{generalized permutant} for $T$ if $H=\emptyset$ or $g \circ h \circ T(g^{-1})\in H$ for every $h \in H$, and every $g \in G$.
\end{definition}

In this case, we have the following commutative diagram:

\[
\xymatrix{X\ar[r]^{g} & X \\ Y \ar[u]^{h} \ar[r]^{T(g)} & Y \ar[u]_{h'=g \circ h \circ T(g^{-1})}
}
\]

We observe that the map $h\mapsto g \circ h \circ T(g^{-1})$ is a bijection from $H$ to $H$, for any $g\in G$.

Definition~\ref{DefGenPermTop} extends Definition~\ref{DefPermTop} in two different directions. First of all, it does not require that the origin perception pair $(\Phi,G)$ and the target perception pair $(\Psi,K)$ coincide. Secondly, it does not require that the elements of the set $H$ are bijections. In Section~\ref{cycles} we will see how the concept of generalized permutant can be applied.

\begin{example}\label{ex_GP}
Let $X,Y$ be two nonempty finite sets, with $Y\subseteq X$. Let $G$ be the group of all permutations of $X$ that preserve $Y$, and $K$ be the group of all permutations of $Y$. Set $\Phi=\mathbb{R}^X$ and $\Psi=\mathbb{R}^Y$. Assume that $T:G\to K$ takes each permutation of $X$ to its restriction to $Y$. Define $H$ as the set of all functions $h:Y\to X$ such that the cardinality of $\mathrm{Im\ } h$ is smaller than a fixed integer $m$. Then $H$ is a generalized permutant for $T$.
\end{example}

In the following two subsections, we will express two other ways to look at generalized permutants, beyond their definition.
To this end, we will assume that two perception pairs
$(\Phi, G)$, $(\Psi, K)$ and a group homomorphism $T : G \to K$ are given,
with $\mathrm{dom}(\Phi) = X$ and $\mathrm{dom}(\Psi) = Y$.

\subsection{Generalized Permutants as unions of equivalence classes}\label{equiv}

In view of Definition \ref{DefGenPermTop}, we can define an equivalence relation $\sim$ on $X^Y$:

\begin{definition}\label{DefEqRelTop}
Let $h, h' \in X^Y$. We say that $h$ is \textit{equivalent} to $h'$, and write $h \sim h'$, if there is a $g \in G$ such that $h' = g \circ h \circ T(g^{-1})$.
\end{definition}

It is easy to see that $\sim$ is indeed an equivalence relation on $X^Y$.

\begin{proposition}\label{propequivclasses}
A subset $H$ of $X^Y$ is a generalized permutant for $T$ if and only if $H$ is a (possibly empty) union of equivalence classes for $\sim$.
\end{proposition}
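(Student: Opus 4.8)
The plan is to prove the two implications separately, exploiting the elementary fact that a subset of a set carrying an equivalence relation is closed under that relation exactly when it is a union of equivalence classes. Throughout I would write $[h]$ for the $\sim$-class of $h \in X^Y$ and rely on the fact, already recorded in the text, that $\sim$ is an equivalence relation; its symmetry is the only property I really use, and it comes from the group structure of $G$: if $h' = g \circ h \circ T(g^{-1})$, then applying the same construction with $g^{-1}$ and using that $T$ is a homomorphism (so $T(g^{-1})^{-1} = T(g)$) yields $h = g^{-1} \circ h' \circ T(g)$, hence $h' \sim h$.

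First I would treat the forward direction. Assume $H$ is a generalized permutant for $T$. The case $H = \emptyset$ is immediate, since the empty set is the empty union of classes, so suppose $H \neq \emptyset$ and fix $h \in H$. For any $h' \in [h]$ we have $h \sim h'$, so there is $g \in G$ with $h' = g \circ h \circ T(g^{-1})$, and the defining closure property of Definition~\ref{DefGenPermTop} gives $h' \in H$; hence $[h] \subseteq H$. As this holds for every $h \in H$, we conclude $H = \bigcup_{h \in H} [h]$, a union of equivalence classes.

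For the converse I would assume $H$ is a union of $\sim$-classes and verify the closure condition of Definition~\ref{DefGenPermTop}. The empty union again gives $H = \emptyset$, which is a generalized permutant by definition, so suppose otherwise and fix $h \in H$ and $g \in G$. Because $H$ is a union of classes and $h \in H$, the whole class $[h]$ lies in $H$. By the very definition of $\sim$ we have $h \sim g \circ h \circ T(g^{-1})$, so $g \circ h \circ T(g^{-1}) \in [h] \subseteq H$. Thus $g \circ h \circ T(g^{-1}) \in H$ for all $h \in H$ and $g \in G$, which is precisely the generalized permutant condition.

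The one point that requires care is the finiteness built into Definition~\ref{DefGenPermTop}: a generalized permutant is by definition a \emph{finite} set, whereas an arbitrary union of $\sim$-classes could be infinite. I would therefore read the statement within the class of finite subsets of $X^Y$. In the forward direction this costs nothing, since $H$ is finite and each $[h] \subseteq H$ is then finite as well; in the converse direction the finiteness of $H$ must be carried along as a standing hypothesis (equivalently, one restricts to finite unions of finite classes), after which the closure argument above shows that $H$ is a genuine generalized permutant. Reconciling this finiteness requirement with the ``union of classes'' formulation is the only subtlety I expect; the logical core is just the standard equivalence between being closed under $\sim$ and being a union of $\sim$-classes.
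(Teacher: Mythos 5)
Your proof is correct and follows essentially the same route as the paper's: the forward direction uses the permutant closure condition to absorb each class $[h]$ into $H$, and the converse uses $g \circ h \circ T(g^{-1}) \sim h$ together with $[h] \subseteq H$. Your remark on the finiteness built into Definition~\ref{DefGenPermTop} is a careful point the paper's proof passes over silently, but it does not change the argument.
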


\begin{proof}
Assume that $H$ is a generalized permutant for $T$. If $h\in H$ and $h \sim h'\in X^Y$, then the definition of the relation $\sim$ and the definition of generalized permutant imply that $h' \in H$ as well,
and therefore $H$ is a union of equivalence classes for $\sim$. Conversely, if $H$ is a union of equivalence classes for the relation $\sim$, $h\in H$ and $g\in G$,
then $g \circ h \circ T(g^{-1})\in H$, since $g \circ h \circ T(g^{-1})\sim h$. As a consequence, $H$ is a generalized permutant for $T$.
\end{proof}

\subsection{Generalized Permutants as unions of orbits}\label{gpauoo}

The map $\alpha:G\times X^Y\to X^Y$ taking $(g,f)$ to $g \circ f \circ T(g^{-1})$ is a left group action,
since $\alpha(\mathrm{id}_X,f)=\mathrm{id}_X \circ f \circ T(\mathrm{id}_X^{-1})=f$ and
$\alpha(g_2,\alpha(g_1,f))
=\alpha(g_2,g_1 \circ f \circ T(g_1^{-1}))
=g_2\circ (g_1 \circ f \circ T(g_1^{-1}))\circ T(g_2^{-1})
=(g_2\circ g_1) \circ f \circ T((g_2\circ g_1)^{-1})
=\alpha(g_2\circ g_1,f)$. For every $f\in X^Y$, the set $O(f):=\{\alpha(g,f):g\in G\}$ is called the \emph{orbit} of $f$.
By observing that $O(f)$ is the equivalence class of $f$ in $X^Y$ for $\sim$, from Proposition \ref{propequivclasses} the following result immediately follows.

\begin{proposition}\label{proporbits}
A subset $H$ of $X^Y$ is a generalized permutant for $T$ if and only if $H$ is a (possibly empty) union of orbits for the group action $\alpha$.
\end{proposition}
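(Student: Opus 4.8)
The plan is to reduce this statement entirely to Proposition~\ref{propequivclasses}, by showing that the orbits of the action $\alpha$ coincide with the equivalence classes of the relation $\sim$. Once this identification is in place, the two notions of ``union of orbits'' and ``union of equivalence classes'' become literally the same, and the desired equivalence follows at once.

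First I would unfold the two definitions side by side. By definition, the orbit of $f$ is $O(f)=\{\alpha(g,f):g\in G\}=\{g \circ f \circ T(g^{-1}):g\in G\}$. On the other hand, the equivalence class of $f$ under $\sim$ consists of those $h'\in X^Y$ with $f\sim h'$, and by Definition~\ref{DefEqRelTop} this means precisely that $h'=g \circ f \circ T(g^{-1})$ for some $g\in G$. Hence the equivalence class of $f$ is exactly $\{g \circ f \circ T(g^{-1}):g\in G\}$, which is nothing but $O(f)$.

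Having established that $O(f)$ equals the $\sim$-class of $f$ for every $f\in X^Y$, I would observe that the partition of $X^Y$ into orbits and its partition into $\sim$-classes are the same partition. Consequently, a subset $H\subseteq X^Y$ is a union of orbits if and only if it is a union of equivalence classes for $\sim$. Combining this with Proposition~\ref{propequivclasses}, which characterizes generalized permutants as the unions of $\sim$-classes, yields that $H$ is a generalized permutant for $T$ if and only if $H$ is a union of orbits for $\alpha$.

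As for the main difficulty, there is essentially none: the statement is a routine instance of the classical fact that the orbits of a group action are exactly the equivalence classes of the canonical relation ``lying in the same orbit''. The only prerequisite is that $\alpha$ really is a group action, so that its orbits do partition $X^Y$; but this verification (checking $\alpha(\mathrm{id}_X,f)=f$ and the compatibility with composition) has already been carried out in the paragraph preceding the proposition, so it may simply be invoked.
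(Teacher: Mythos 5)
Your proposal is correct and matches the paper's own argument exactly: the paper likewise notes that $O(f)$ is the equivalence class of $f$ under $\sim$ and then derives the proposition immediately from Proposition~\ref{propequivclasses}. Your version merely spells out this identification in more detail, which is fine.
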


The main use of the concept of generalized permutant is expressed by the following theorem, extending Proposition \ref{PermGOTop}.

\begin{theorem}\label{GenPermGOTop}
Let $(\Phi, G)$, $\mathrm{dom}(\Phi) = X$ and $(\Psi, K)$, $\mathrm{dom}(\Psi) = Y$ be perception pairs, $T : G \to K$ a group homomorphism, and $H$ be a generalized permutant for $T$.
Then the restriction to $\Phi$ of the operator $F : \mathbb{R}^X \to \mathbb{R}^Y$ defined by \[ F(\varphi) := \frac{1}{\lvert H\rvert } \sum_{h \in H} \varphi \circ h \] is a GENEO from
$(\Phi, G)$ to $(\Psi, K)$ with respect to $T$ provided $F(\Phi) \subseteq \Psi$.
\end{theorem}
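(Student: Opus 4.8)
The plan is to verify the two defining properties of a GENEO from Definition~\ref{DefGENEOTop}: equivariance with respect to $T$ and non-expansivity. Since the hypothesis already grants $F(\Phi)\subseteq\Psi$, I may treat $F$ as a well-defined map $\Phi\to\Psi$ and need only check these two conditions. The structure of $F$ as a normalized sum $\frac{1}{|H|}\sum_{h\in H}\varphi\circ h$ closely mirrors the classical permutant construction of Proposition~\ref{PermGOTop}, so I expect both arguments to be direct computations exploiting, respectively, the permutant-invariance of $H$ and the fact that each $h$ is a set-map composed with isometries.

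For equivariance, I would fix $\varphi\in\Phi$ and $g\in G$ and compute $F(\varphi\circ g)$. Expanding the definition gives $F(\varphi\circ g)=\frac{1}{|H|}\sum_{h\in H}(\varphi\circ g)\circ h=\frac{1}{|H|}\sum_{h\in H}\varphi\circ(g\circ h)$. The goal is to show this equals $F(\varphi)\circ T(g)=\frac{1}{|H|}\sum_{h\in H}\varphi\circ h\circ T(g)$. The key idea is the change of summation index: since $H$ is a generalized permutant, the map $h\mapsto g^{-1}\circ h\circ T(g)$ (equivalently $h\mapsto g\circ h\circ T(g^{-1})$ applied with $g^{-1}$) is a bijection of $H$ onto itself, as noted in the excerpt just after Definition~\ref{DefGenPermTop}. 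Reindexing the sum $\sum_{h\in H}\varphi\circ(g\circ h)$ by substituting $h\mapsto g^{-1}\circ h'\circ T(g)$ turns each term into $\varphi\circ(g\circ g^{-1}\circ h'\circ T(g))=\varphi\circ h'\circ T(g)$, and the bijectivity ensures the reindexed sum runs over all of $H$ exactly once. This yields precisely $F(\varphi)\circ T(g)$.

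For non-expansivity, I would estimate $\|F(\varphi_1)-F(\varphi_2)\|_\infty$ by the triangle inequality applied to the normalized sum: $\|F(\varphi_1)-F(\varphi_2)\|_\infty=\left\|\frac{1}{|H|}\sum_{h\in H}(\varphi_1-\varphi_2)\circ h\right\|_\infty\leq\frac{1}{|H|}\sum_{h\in H}\|(\varphi_1-\varphi_2)\circ h\|_\infty$. Now $\|(\varphi_1-\varphi_2)\circ h\|_\infty=\sup_{y\in Y}|(\varphi_1-\varphi_2)(h(y))|\leq\sup_{x\in X}|(\varphi_1-\varphi_2)(x)|=\|\varphi_1-\varphi_2\|_\infty$, since precomposition with any function $h:Y\to X$ cannot increase the supremum of the absolute value (the image $h(Y)\subseteq X$). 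Summing $|H|$ such terms and dividing by $|H|$ gives the bound $\|\varphi_1-\varphi_2\|_\infty$, as required.

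The construction is robust precisely because non-expansivity does \emph{not} rely on $h$ being a bijection or an isometry—it holds for an arbitrary set-map $h:Y\to X$—which is exactly the extra generality that Definition~\ref{DefGenPermTop} permits over Definition~\ref{DefPermTop}. I therefore do not anticipate a serious obstacle; the one point warranting care is the reindexing step in the equivariance proof, where I must justify that $h\mapsto g\circ h\circ T(g^{-1})$ is genuinely a bijection of the finite set $H$ (injectivity follows from the invertibility of $g$ and $T(g)$, and surjectivity from finiteness together with the permutant condition $g\circ h\circ T(g^{-1})\in H$). Everything else reduces to substituting the definition of $F$ and manipulating finite sums.
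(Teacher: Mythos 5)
Your proposal is correct and takes essentially the same route as the paper's proof: your reindexing in the equivariance step is exactly the paper's change of variable $h'=g\circ h\circ T(g^{-1})$ (the paper inserts $T(g^{-1})\circ T(g)=\mathrm{id}$ before substituting, while you substitute $h=g^{-1}\circ h'\circ T(g)$ directly, which is the same bijection of $H$ noted after Definition~\ref{DefGenPermTop}), and your non-expansivity estimate via the triangle inequality and the bound $\left\|(\varphi_1-\varphi_2)\circ h\right\|_\infty\leq\left\|\varphi_1-\varphi_2\right\|_\infty$ matches the paper's line by line. Your added care in justifying the bijectivity of $h\mapsto g\circ h\circ T(g^{-1})$ and your observation that non-expansivity requires no bijectivity of $h$ are sound but do not change the argument.
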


\begin{proof}
Let $\varphi \in \Phi$ and $g \in G$. Then by the definition of a generalized permutant and the change of variable $h'=g \circ h \circ T(g^{-1})$, we have
\begin{align*}
    F(\varphi \circ g) & := \frac{1}{\lvert H\rvert } \sum_{h \in H} (\varphi \circ g) \circ h \\
    & = \frac{1}{\lvert H\rvert } \sum_{h \in H} \varphi \circ g \circ h \circ T(g^{-1}) \circ T(g) \\
    & = \frac{1}{\lvert H\rvert } \sum_{h' \in H} \varphi \circ h' \circ T(g) \\
    & = F(\varphi) \circ T(g)
\end{align*}
whence $F$ is equivariant.

If $\varphi_1, \varphi_2 \in \Phi$, then
\begin{align*}
    \left\| F(\varphi_1) - F(\varphi_2) \right\|_{\infty} 
    & = \left\| \frac{1}{\lvert H\rvert } \sum_{h \in H} \varphi_1 \circ h - \frac{1}{\lvert H\rvert } \sum_{h \in H} \varphi_2 \circ h \right\|_{\infty} \\
    & = \frac{1}{\lvert H\rvert } \| \sum_{h \in H} (\varphi_1 \circ h - \varphi_2 \circ h) \|_{\infty} \\
    & \leq \frac{1}{\lvert H\rvert } \sum_{h \in H} \| \varphi_1 \circ h - \varphi_2 \circ h \|_{\infty} \\
    & \leq \frac{1}{\lvert H\rvert } \sum_{h \in H} \| \varphi_1 - \varphi_2 \|_{\infty} \\
    & = \frac{1}{\lvert H\rvert } \lvert H\rvert  \| \varphi_1 - \varphi_2 \|_{\infty} \\
    & = \| \varphi_1 - \varphi_2 \|_{\infty}
\end{align*}
whence $F$ is non-expansive, and hence a GENEO.
\end{proof}

\subsection{Generalized permutant measures}\label{GPM}

As shown in \cite{BBBFQ20}, the concept of permutant can be extended to the one of \emph{permutant measure}, provided that the set $X$ is finite.
This is done by the following definition, referring to a subgroup $G$ of the group $\mathrm{Aut}(X)$ of all permutations of the set $X$, and to the perception pair $(\mathbb{R}^X,G)$.

\begin{definition}\cite{BBBFQ20}\label{permutantmeasure}
A finite signed measure ${\mu}$ on $\mathrm{Aut}(X)$ is called a \emph{permutant measure} with respect to $G$ if each subset $H$ of $\mathrm{Aut}(X)$ is measurable and $\mu$ is invariant under the conjugation action of $G$ (i.e., ${\mu}(H)={\mu}(g H g^{-1})$ for every $g\in G$).
\end{definition}

With a slight abuse of notation, we will denote by ${\mu}(h)$ the signed measure of the singleton $\{h\}$ for each $h\in \mathrm{Aut}(X)$.
The next example shows how we can apply Definition \ref{permutantmeasure}.

\begin{example}\label{exS2}
Let us consider the set $X$ of the vertices of a cube in $\mathbb{R}^3$, and the group $G$ of the orientation-preserving isometries of $\mathbb{R}^3$ that take
$X$ to $X$. Set $T=\mathrm{id}_G$. Let $\pi_1,\pi_2,\pi_3$ be the three planes that
contain the center of mass of $X$ and are parallel to a face of the cube.
Let $h_i:X\to X$ be the orthogonal symmetry with respect to $\pi_i$, for
$i\in \{1,2,3\}$. We have that the set $\{h_1,h_2,h_3\}$ is an orbit under
the action expressed by the map $\alpha$ defined in Section \ref{gpauoo}. 
We can now define a permutant measure $\mu$ on $\mathrm{Aut}(X)$ by setting
$\mu(h_1)=\mu(h_2)=\mu(h_3)= c$, where $c$ is a positive real number, and
$\mu(h)=0$ for any $h\in \mathrm{Aut}(X)$ with $h\notin \{h_1,h_2,h_3\}$.
We also observe that while the cardinality of $G$ is $24$, the cardinality
of the support $\mathrm{supp}(\mu):=\{h\in \mathrm{Aut}(X):\mu(h)\neq 0\}$
of the signed measure $\mu$ is $3$.
\end{example}

The concept of permutant measure is important because it makes available the following representation result.

\begin{theorem}\cite{BBBFQ20}\label{mainthm2}
Assume that $G\subseteq \mathrm{Aut}(X)$ transitively acts on the finite set $X$ and $F$ is a map from $\mathbb{R}^X$ to $\mathbb{R}^X$. The map $F$ is a linear group equivariant non-expansive operator 
from $(\mathbb{R}^X,G)$ to $(\mathbb{R}^X,G)$ with respect to the homomorphism $\mathrm{id}_G:G\to G$ if and only if a permutant measure ${\mu}$ exists such that
$F(\varphi)=\sum_{h\in {\mathrm{Aut}(X)}}\varphi \circ h^{-1}\ {\mu}(h)$ for every $\varphi\in {\mathbb{R}^X}$,
and $\sum_{h\in {\mathrm{Aut}(X)}}\lvert \mu(h)\rvert \le 1$.
\end{theorem}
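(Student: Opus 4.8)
The plan is to prove the two implications separately, the forward (sufficiency) direction being routine and the converse (necessity) direction carrying all the difficulty. For sufficiency, suppose $F(\varphi)=\sum_{h}\varphi\circ h^{-1}\,\mu(h)$ with $\mu$ a permutant measure and $\sum_h|\mu(h)|\le 1$. Linearity is immediate since each $\varphi\mapsto\varphi\circ h^{-1}$ is linear. For equivariance I would compute $F(\varphi\circ g)=\sum_h \varphi\circ g\circ h^{-1}\,\mu(h)$ and, after the substitution $h\mapsto g^{-1}hg$ (a bijection of $\mathrm{Aut}(X)$), use the conjugation invariance $\mu(g^{-1}hg)=\mu(h)$ to rewrite the sum as $\big(\sum_h\varphi\circ h^{-1}\mu(h)\big)\circ g=F(\varphi)\circ g$. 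Non-expansivity then follows from the triangle inequality for $\|\cdot\|_\infty$ together with $\|\psi\circ h^{-1}\|_\infty=\|\psi\|_\infty$ and $\sum_h|\mu(h)|\le 1$, exactly as in the proof of Theorem~\ref{GenPermGOTop}.

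For necessity I would exploit finiteness throughout: since $X$ is finite, $\mathbb{R}^X$ is finite-dimensional, $\mathrm{Aut}(X)$ is a finite group, and a signed measure on it is just a real number $\mu(h)$ for each $h$. Writing $F$ as a matrix $(F_{y,x})$ in the indicator basis $e_x$ (so that $e_x\circ h^{-1}=e_{h(x)}$), the target identity $F(\varphi)=\sum_h\varphi\circ h^{-1}\mu(h)$ is equivalent to $F_{y,x}=\sum_{h:\,h(x)=y}\mu(h)$. Equivariance of $F$ is equivalent to $F$ commuting with every permutation operator $P_g\colon\varphi\mapsto\varphi\circ g^{-1}$, i.e.\ to $F_{g(y),g(x)}=F_{y,x}$; and non-expansivity, by linearity, is equivalent to the operator-norm bound $\|F\|_{\infty\to\infty}=\max_y\sum_x|F_{y,x}|\le 1$. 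Here transitivity enters decisively: equivariance already forces all absolute row sums to be equal and all absolute column sums to be equal, and transitivity makes these two common values coincide, so $F$ is a magic-sum matrix (equal row and column sums) with total mass $\le 1$.

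I would then obtain the measure in two moves. First, because the span of all permutation matrices over $\mathrm{Aut}(X)=\mathrm{Sym}(X)$ is exactly the space of magic-sum matrices, there exists some (a priori non-invariant) $\mu_0$ with $F=\sum_h\mu_0(h)P_h$; the key refinement is to choose $\mu_0$ with controlled total variation, realizing $\sum_h|\mu_0(h)|\le\max_y\sum_x|F_{y,x}|\le 1$. Second, I would symmetrize by averaging, setting $\bar\mu(h):=\frac{1}{|G|}\sum_{g\in G}\mu_0(g^{-1}hg)$. This $\bar\mu$ is conjugation-invariant; it still satisfies $\sum_h\bar\mu(h)P_h=\frac{1}{|G|}\sum_{g\in G} P_g\,F\,P_g^{-1}=F$, because $P_{gh'g^{-1}}=P_g P_{h'}P_g^{-1}$ and $F$ commutes with each $P_g$; and it has $\sum_h|\bar\mu(h)|\le\sum_h|\mu_0(h)|\le 1$ by convexity of the absolute value. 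Hence $\bar\mu$ is the required permutant measure.

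The main obstacle is the controlled decomposition in the first move: expressing a magic-sum matrix whose absolute row and column sums are $\le 1$ as a combination of permutation matrices with $\ell^1$-coefficient mass $\le 1$. For nonnegative entries this is precisely Birkhoff--von Neumann; the signed case requires an extremal peeling argument, iteratively subtracting a suitably signed permutation supported on entries of maximal modulus, and it is exactly here that transitivity and non-expansivity must be combined to guarantee that the accumulated mass never exceeds $1$. Everything else---the linearity bookkeeping, the equivariance/commutation dictionary, and the averaging step---is routine.
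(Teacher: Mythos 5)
You should first note that the paper itself offers no proof of Theorem~\ref{mainthm2}: it is recalled verbatim from \cite{BBBFQ20}, so your attempt can only be measured against the cited source and against correctness in its own right. Most of your architecture is sound. The sufficiency direction is correct: the reindexing $h\mapsto g^{-1}hg$ together with $\mu(g^{-1}hg)=\mu(h)$ gives equivariance, and the triangle inequality with $\sum_h\lvert\mu(h)\rvert\le 1$ gives non-expansivity. The matrix dictionary ($F_{y,x}=\sum_{h:\,h(x)=y}\mu(h)$, equivariance $\Leftrightarrow F_{g(y),g(x)}=F_{y,x}$, non-expansivity $\Leftrightarrow$ max absolute row sum $\le 1$) is right, and the final symmetrization step is fully correct: $P_{gh'g^{-1}}=P_gP_{h'}P_g^{-1}$ and commutation give $\sum_h\bar\mu(h)P_h=F$, while the triangle inequality preserves the mass bound. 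One small misattribution: equivariance alone only makes row (and column) sums constant on $G$-orbits; transitivity is what makes them all equal, and the common row value coincides with the common column value by double counting the total sum, not by transitivity.

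The genuine gap is exactly the step you flag as ``the main obstacle'' and then leave as a sketch: that a matrix in the span of permutation matrices (all signed line sums equal) with $\max_y\sum_x\lvert F_{y,x}\rvert\le 1$ admits a decomposition $F=\sum_h\mu_0(h)P_h$ with $\sum_h\lvert\mu_0(h)\rvert\le 1$. Your proposed ``extremal peeling'' --- iteratively subtracting a signed permutation supported on entries of maximal modulus --- cannot work as stated, because a magic-sum matrix need not contain \emph{any} permutation in its support. For instance,
\[
A=\begin{pmatrix}-1&1&1\\ 1&0&0\\ 1&0&0\end{pmatrix}
\]
has all row and column sums equal to $1$, yet rows $2$ and $3$ are nonzero only in column $1$, so every permutation meets a zero entry; any decomposition, e.g.\ $A=P_{(1\,2)}+P_{(1\,3)}-P_{\mathrm{id}}$, must pass through the zero block, and bounding the accumulated $\ell^1$ mass in that regime is precisely the nontrivial content of the lemma. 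Nor can you rescue the induction by appealing to ``transitivity and non-expansivity combined'': the equal-absolute-line-sum property furnished by equivariance is destroyed after the first peel (your plan decomposes an arbitrary magic-sum matrix first and restores invariance only afterwards by averaging), so nothing constrains the intermediate matrices. Until this signed Birkhoff-type decomposition with total-variation control is actually proved --- say by an LP-duality argument identifying the minimal decomposition mass with the $\ell^\infty\!\to\ell^\infty$ operator norm on the magic-sum subspace, or by an induction that explicitly handles permutations leaving the support --- the necessity direction of the theorem is not established by your argument.
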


We now state a definition that extends the concept of permutant measure.

\begin{definition}\label{defGPM}
Let $X$, $Y$ be two finite nonempty sets. Let us choose a subgroup $G$ of $\mathrm{Aut}(X)$, a subgroup $K$ of $\mathrm{Aut}(Y)$, and a homomorphism $T:G\to K$. A finite signed measure ${\mu}$ on $X^Y$ is called a \emph{generalized permutant measure} with respect to $T$ if each subset $H$ of $X^Y$ is measurable and ${\mu}\left(g \circ H \circ T(g^{-1})\right)={\mu}\left(H\right)$ for every $g\in G$.
\end{definition}

Definition~\ref{defGPM} extends Definition~\ref{permutantmeasure} in two different directions. First of all, it does not require that the origin perception pair $(\mathbb{R}^X,G)$ and the target perception pair $(\mathbb{R}^Y,K)$ coincide. Secondly, 
the measure $\mu$ is not defined on $\mathrm{ Aut}(X)$ but on the set $X^Y$.

\begin{example}\label{ex_GPM}
Let $X,Y$ be two nonempty finite sets, with $Y\subseteq X$. Let $G$ be the group of all permutations of $X$ that preserve $Y$, and $K$ be the group of all permutations of $Y$. Set $\Phi=\mathbb{R}^X$ and $\Psi=\mathbb{R}^Y$. Assume that $T:G\to K$ takes each permutation of $X$ to its restriction to $Y$. For any positive integer $m$, define $H_m$ as the set of all functions $h:Y\to X$ such that the cardinality of $\mathrm{Im\ } h$ is equal to $m$. For each $h\in H_m$, let us set $\mu(h):=\frac{1}{m \lvert H_m \rvert }$. Then $\mu$ is a generalized permutant measure with respect to $T$.
\end{example}

We can prove the following result, showing that every generalized permutant measure allows us to build a GENEO between perception pairs.

\begin{theorem}\label{GenPermMeasThm}
Let $X$, $Y$ be two finite nonempty sets. Let us choose a subgroup $G$ of $\mathrm{Aut}(X)$, a subgroup $K$ of $\mathrm{Aut}(Y)$, and a homomorphism $T:G\to K$.
If $\mu$ is a generalized permutant measure with respect to $T$, then the map $F_\mu:\mathbb{R}^X\to\mathbb{R}^Y$ defined by setting
$F_\mu(\varphi):=\sum_{f\in X^Y}\varphi \circ f\ {\mu}(f)$ is a linear GEO
from $(\Phi, G)$ to $(\Psi, K)$ with respect to $T$. If $\sum_{f\in X^Y} \lvert \mu(f)\rvert \le 1$, then $F$ is a GENEO.
\end{theorem}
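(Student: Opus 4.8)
The plan is to verify the three asserted properties of $F_\mu$ in the order linearity, equivariance, and non-expansivity. Linearity is immediate: for $\varphi_1,\varphi_2\in\mathbb{R}^X$ and scalars $a,b$, precomposition with each $f\in X^Y$ is linear, so $(a\varphi_1+b\varphi_2)\circ f=a(\varphi_1\circ f)+b(\varphi_2\circ f)$, and summing these against the fixed weights $\mu(f)$ gives $F_\mu(a\varphi_1+b\varphi_2)=aF_\mu(\varphi_1)+bF_\mu(\varphi_2)$. Note also that no proviso of the form $F_\mu(\Phi)\subseteq\Psi$ is needed here, since $\Phi=\mathbb{R}^X$ and $\Psi=\mathbb{R}^Y$ are the full function spaces and $F_\mu$ lands in $\mathbb{R}^Y$ by construction.

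For equivariance I would mimic the computation in the proof of Theorem~\ref{GenPermGOTop}, replacing the uniform weights $1/|H|$ by the signed weights $\mu(f)$. Fix $\varphi\in\Phi$ and $g\in G$ and start from $F_\mu(\varphi\circ g)=\sum_{f\in X^Y}(\varphi\circ g\circ f)\,\mu(f)$. The key device is the change of variable $f'=g\circ f\circ T(g^{-1})$, which is a bijection of the full set $X^Y$ (being the group action $\alpha(g,\cdot)$ from Section~\ref{gpauoo}, with inverse $\alpha(g^{-1},\cdot)$). Since $T$ is a homomorphism one gets $f'\circ T(g)=g\circ f$, so that $\varphi\circ g\circ f=\varphi\circ f'\circ T(g)$; and applying the invariance of $\mu$ to singletons yields $\mu(f)=\mu(g\circ f\circ T(g^{-1}))=\mu(f')$. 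Re-indexing the finite sum by the bijection $f\mapsto f'$ then gives $F_\mu(\varphi\circ g)=\sum_{f'}(\varphi\circ f'\circ T(g))\,\mu(f')=\big(\sum_{f'}(\varphi\circ f')\,\mu(f')\big)\circ T(g)=F_\mu(\varphi)\circ T(g)$, which is the GEO property.

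Finally, for non-expansivity I would use linearity to write $F_\mu(\varphi_1)-F_\mu(\varphi_2)=\sum_{f}\big((\varphi_1-\varphi_2)\circ f\big)\,\mu(f)$ and bound it pointwise. For each $y\in Y$ the triangle inequality gives $|(F_\mu(\varphi_1)-F_\mu(\varphi_2))(y)|\le\sum_f|\varphi_1(f(y))-\varphi_2(f(y))|\,|\mu(f)|\le\|\varphi_1-\varphi_2\|_\infty\sum_f|\mu(f)|$, using that $|\varphi_1(f(y))-\varphi_2(f(y))|\le\|\varphi_1-\varphi_2\|_\infty$ for every $f$ and $y$. Under the hypothesis $\sum_f|\mu(f)|\le 1$ this is at most $\|\varphi_1-\varphi_2\|_\infty$, and taking the supremum over $y\in Y$ gives the non-expansivity inequality, so $F_\mu$ is a GENEO.

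I expect no serious obstacle, as the argument parallels Theorem~\ref{GenPermGOTop} and the standard permutant-measure estimate. The one point that demands care is the equivariance step: I must confirm that $f\mapsto g\circ f\circ T(g^{-1})$ is genuinely a bijection of the entire set $X^Y$ (not merely of a permutant), and that the singleton invariance $\mu(f)=\mu(f')$ extracted from Definition~\ref{defGPM} is precisely what licenses the re-indexing of the signed sum. Both hold because $\alpha$ is a group action and $\mu$ is assumed invariant on all subsets of $X^Y$, in particular on singletons.
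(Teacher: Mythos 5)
Your proposal is correct and follows essentially the same route as the paper's proof: the identical change of variable $\hat f = g\circ f\circ T(g^{-1})$ together with the singleton invariance $\mu(\hat f)=\mu(f)$ and the bijectivity of $\alpha(g,\cdot)$ on all of $X^Y$ for equivariance, and the standard triangle-inequality estimate against $\sum_f\lvert\mu(f)\rvert\le 1$ for non-expansivity (the paper bounds $\|F_\mu(\varphi)\|_\infty\le\|\varphi\|_\infty$ and invokes linearity, while you bound $F_\mu(\varphi_1)-F_\mu(\varphi_2)$ pointwise, which is the same argument). Your explicit remarks --- that singletons are measurable so Definition~\ref{defGPM} licenses the re-indexing, and that no proviso $F_\mu(\Phi)\subseteq\Psi$ is needed since the target is all of $\mathbb{R}^Y$ --- are accurate and merely make explicit what the paper leaves implicit.
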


\begin{proof}
It is immediate to check that $F_\mu$ is linear. Moreover, by applying the change of variable $\hat f=g \circ f \circ T(g^{-1})$ and the equality
$\mu\left(g \circ f \circ T(g^{-1})\right)=\mu(f)$, for every $\varphi\in\mathbb{R}^X$ and every $g\in G$ we get
\begin{align*}
F_\mu(\varphi \circ g) 
&=\sum_{f\in {X^Y}}\varphi \circ g \circ f\ {\mu}(f)\\
& =\sum_{f\in {X^Y}}\varphi \circ g \circ f \circ T(g^{-1})\circ T(g)\ {\mu}(g \circ f \circ T(g^{-1}))\nonumber\\
&=\sum_{\hat f\in {X^Y}}\varphi \circ \hat f \circ T(g)\ {\mu}(\hat f)\nonumber\\
&=F_\mu(\varphi) \circ T(g) \nonumber
\end{align*}
since the map $f\mapsto g \circ f \circ T\left(g^{-1}\right)$ is a bijection from $X^Y$ to $X^Y$. This proves that $F_\mu$ is equivariant.

If $\sum_{f\in X^Y}\lvert \mu(f)\rvert \le 1$,
\begin{align*}
\|F_\mu(\varphi)\|_\infty &=\left\|\sum_{f\in {X^Y}}\varphi \circ f\ {\mu}(f)\right\|_\infty\nonumber\\
&\le\sum_{f\in {X^Y}}\left\|\varphi \circ f\right\|_\infty \lvert {\mu}(f)\rvert \nonumber\\
&\le\sum_{f\in {X^Y}}\left\|\varphi \right\|_\infty \lvert {\mu}(f) \rvert \nonumber\\
&=\|\varphi\|_\infty\sum_{f\in {X^Y}} \  \lvert {\mu}(f)\rvert \nonumber\\
&\le\|\varphi\|_\infty.\nonumber
\end{align*}
This implies that the linear map $F_\mu$ is non-expansive, and concludes the proof of our theorem.
\end{proof}

The condition $\lvert \mathrm{supp}(\mu)\rvert \ll \lvert G\rvert $ is not rare in applications (cf., e.g., Example \ref{exS2}) and is the main reason to build GEOs by means of (generalized) permutant measures, instead of using the representation of GEOs as $G$-convolutions and integrating on possibly large groups.

In the following, we will apply the aforementioned concepts to graphs. For the sake of simplicity, we will drop the word ``generalized'' and use the expression ``graph permutant''.


\section{GENEOs on Graphs}\label{GraphModel}

The notions of perception pair, permutant, and GENEO can be easily applied in a graph-theoretic setting. In this section, we develop a model for graphs with weights assigned to their vertices ($\mathrm{vw}$-graphs, for short), and another for graphs with weights assigned to their edges ($\mathrm{ew}$-graphs, for short), often called "weighted graphs" in literature. Our vertex model has implications for the rapidly growing field of graph convolutional neural networks, while the edge model we propose owes its significance to the widely recognized importance of weighted graphs. 

As a graph \cite{bondy1976graph} we shall mean a triple ${\Gamma} =(V_{\Gamma}, E_{\Gamma}, \psi_{\Gamma})$, where $\psi_{\Gamma}$ assigns to each edge of $E_{\Gamma}$ the unordered pair of its end vertices in $V_{\Gamma}$. Since we only consider simple graphs (i.e. with no loops and no multiple edges) we write $e=\{A, B\}$ to mean $\psi_{\Gamma}(e) = \{A, B\}$. Let us recall that an automorphism $g$ of $\Gamma$ is a pair $g = (g_V, g_E)$, where $g_V : V_{\Gamma} \to V_{\Gamma}$ and $g_E : E_{\Gamma} \to E_{\Gamma}$ are bijections respecting the incidence function $\psi_{\Gamma}$. 
The group $\mathrm{Aut}({\Gamma})$ of all automorphisms of $\Gamma$ induces two particular subgroups, here denoted as $\mathrm{Aut}({V_{\Gamma}})$ and $\mathrm{Aut}({E_{\Gamma}})$, of the groups of permutations of $V_{\Gamma}$ and of $E_{\Gamma}$. We represent permutations as cycle products.

For any $k \in \mathbb{N}$, put
\[ \mathbb{N}_k := \{ 1 \le i \le k \mid i \in \mathbb{N} \}. \]
Let a graph $\Gamma = (V_\Gamma, E_\Gamma, \psi_{\Gamma})$ with $n$ vertices and $m$ edges be given. By fixing an indexing of the vertices (resp. edges), we can identify $\mathrm{Aut}({V_{\Gamma}})$ (resp.  $\mathrm{Aut}({E_{\Gamma}})$) with some subgroup of $S_n$ (resp. $S_m$), for the sake of simplicity. Analogously, a real function  defined on $V_{\Gamma}$ (resp. $E_{\Gamma}$) will be represented as an $n$-tuple (resp. $m$-tuple) of real numbers. Anyway, we shall denote vertices (resp. edges) by consecutive capital (resp. lowercase) letters and not by numerical indexes.

In this section, we will consider a space $\Phi_{V_{\Gamma}}$ of real valued functions on $V_\Gamma$, as a subspace of $\mathbb{R}^n$ endowed with the sup-norm $\| \cdot \|_\infty$; i.e., the real valued functions $\varphi \in \Phi_{V_{\Gamma}}$ on the vertex set $V_{\Gamma}$ are given by vectors $\varphi = (\varphi^1, \varphi^2, \cdots, \varphi^n)$ of length $n$. Analogously,
the symbol $\Phi_{E_{\Gamma}}$ will refer to a subspace of $\mathbb{R}^m$ endowed with the sup-norm; i.e., the real valued functions $\varphi \in \Phi_{E_{\Gamma}}$ on the edge set $E_{\Gamma}$ are given by vectors $\varphi = (\varphi^1, \varphi^2, \cdots, \varphi^m)$ of length $m$.

  Let $G$ be a subgroup of the group $\mathrm{Aut}({V_\Gamma})$ (resp. $\mathrm{Aut}({E_\Gamma})$) corresponding to the group of all graph automorphisms of ${\Gamma}$. By the previous convention, the elements of $G$ can be considered to be permutations of the set $\mathbb{N}_n$ (resp. $\mathbb{N}_m$).

\subsection{GENEOs on graphs weighted on vertices}\label{VertexModel}

The concepts of perception pair, GEO/GENEO, and (generalized) permutant can be applied to $\mathrm{vw}$-graphs.

\begin{definition}\label{DefGrPercPrVM}
Let $\Phi_{V_{\Gamma}}$ be a set of functions from $V_\Gamma$ to $\mathbb{R}$ and $G$ be a subgroup of $\mathrm{Aut}({V_{\Gamma}})$. If $(\Phi_{V_{\Gamma}}, G)$ is a perception pair, we will call it a $\mathrm{vw}$-\textit{graph perception pair for} $\Gamma = (V_\Gamma, E_\Gamma, \psi_{\Gamma})$, and will write $\mathrm{dom}(\Phi_{V_{\Gamma}}) = V_\Gamma$.
\end{definition}

\begin{definition}\label{DefGrGOVM}
Let $(\Phi_{V_{\Gamma_1}}, G_1)$ and $(\Phi_{V_{\Gamma_2}}, G_2)$ be two $\mathrm{vw}$-graph perception pairs and $T : G_1 \to G_2$ be a group homomorphism. If $F : \Phi_{V_{\Gamma_1}} \to \Phi_{V_{\Gamma_2}}$ is a {GEO} (resp. {GENEO}) from $(\Phi_{V_{\Gamma_1}}, G_1)$ to $(\Phi_{V_{\Gamma_2}}, G_2)$ with respect to $T$, we will say that $F$ is a $\mathrm{vw}$-\emph{graph GEO} (resp. $\mathrm{vw}$-\emph{graph GENEO}).
\end{definition}

\begin{definition}\label{DefGrPermVM}
Let $(\Phi_{V_{\Gamma_1}}, G_1)$ and $(\Phi_{V_{\Gamma_2}}, G_2)$ be two $\mathrm{vw}$-graph perception pairs and $T : G_1 \to G_2$ be a group homomorphism.
We say that $H\subseteq {V_{\Gamma_1}}^{V_{\Gamma_2}}$ is a $\mathrm{vw}$-\textit{graph permutant for $T$} if $\alpha_g (H) \subseteq H$ for every $g \in G_1$; that is, $\alpha_g (f) = g \circ f \circ T(g^{-1}) \in H$ for every $f \in H$ and $g \in G_1$.
\end{definition}


Let us consider some examples of $\mathrm{vw}$-graph perception pairs, $\mathrm{vw}$-graph GENEOs and $\mathrm{vw}$-graph permutants.

\begin{figure}[ht]
\centering
\includegraphics[width=1.5in]{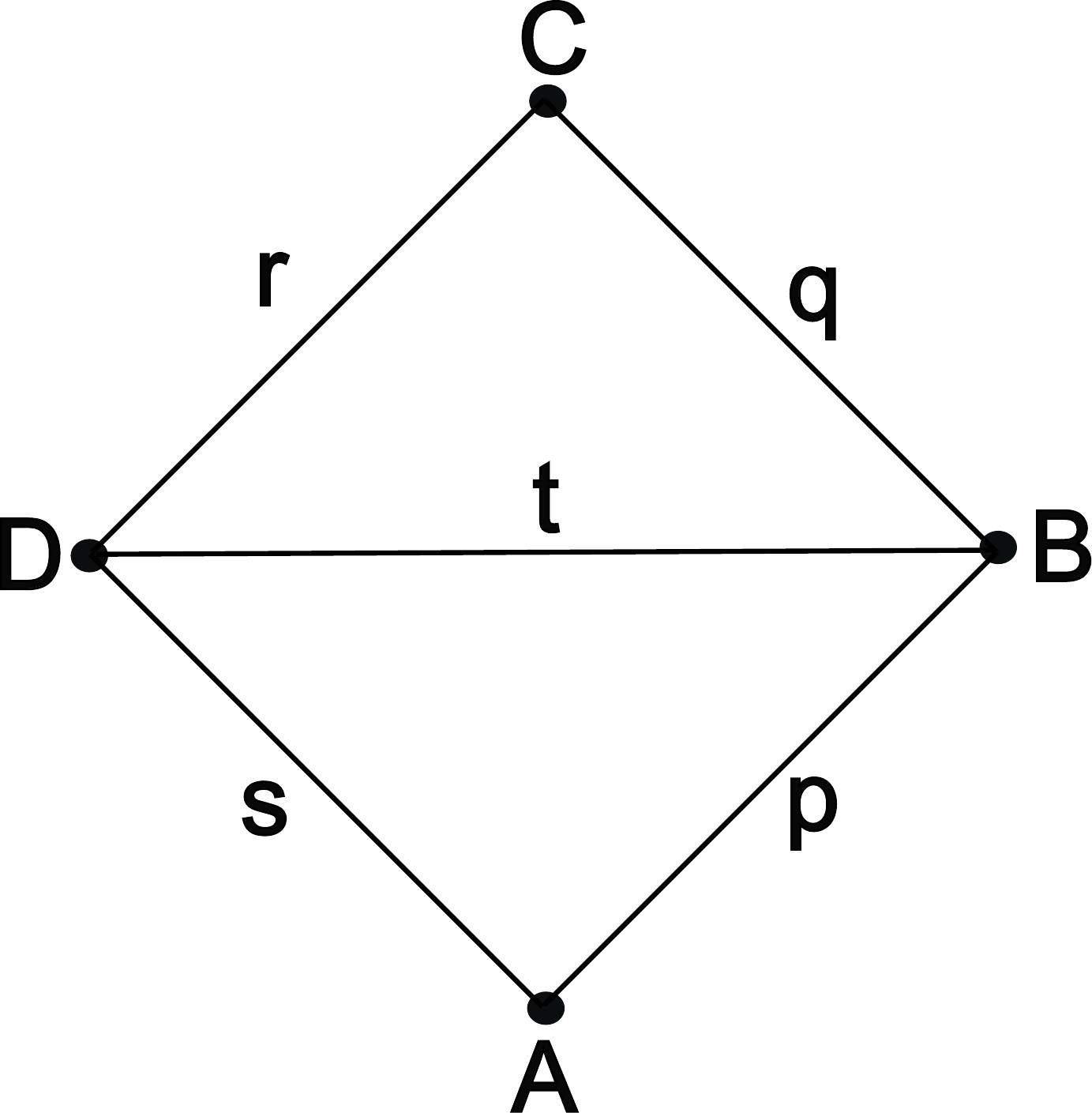}
\caption{The graph of Example~\ref{PnPrVM01}.}
\label{theta}
\end{figure}

\begin{example}\label{PnPrVM01}
Consider the graph $\Gamma = (V_\Gamma, E_\Gamma, \psi_\Gamma)$ with vertex set $V_\Gamma = \{ A, B, C, D \}$ and edge set $E_\Gamma = \big\{ p = \{ A, B\}, q = \{B, C\}, r = \{ C, D\}, s = \{ A, D\}$, $t = \{ B, D\} \big\}$ (see Fig.~\ref{theta}). Its automorphism group $\mathrm{Aut}(V_\Gamma)$ is given by
\[ \mathrm{Aut}(V_\Gamma) = \{\mathrm{id}_{\mathbb{N}_4}, (A, C), (B, D), (A, C)(B, D)\}. \]
Let
\[ G = \{ \mathrm{id}_{\mathbb{N}_4}, \delta =(B, D) \}, \]
and $\Phi_{V_{\Gamma}}$ be the subspace of $\mathbb{R}^4$ given by 
\[ \Phi_{V_{\Gamma}} := \{ \varphi = (\varphi^1, \varphi^2, \varphi^3, \varphi^4) \in \mathbb{R}^4 \mid \varphi^1 + \varphi^3 = 0 \}. \] 
Clearly, $\varphi \circ \delta = (\varphi^1, \varphi^4, \varphi^3, \varphi^2) \in \Phi_{V_{\Gamma}}$ for all $\varphi \in \Phi_{V_{\Gamma}}$; so, $(\Phi_{V_{\Gamma}}, G)$ is a $\mathrm{vw}$-graph perception pair for $\Gamma$.
\end{example}

The next example shows that we can have different perception pairs with the same graph and the same group.

\begin{example}\label{PnPrVM02}

Let $G$ be as in Example \ref{PnPrVM01} and 
\[ \Phi_{V_{\Gamma}} = \{ \varphi = (\varphi^1, \varphi^2, \varphi^3, \varphi^4) \in \mathbb{R}^4 \mid \sum_{i \in \mathbb{N}_4} (\varphi^i)^2 \leq 1 \}. \] Then $(\Phi_{V_{\Gamma}}, G)$ is a $\mathrm{vw}$-graph perception pair.
\end{example}
We can now define a simple class of GENEOs.

\begin{example}\label{GOVM03}
Let $(\Phi_{V_{\Gamma}}, G)$ be as in Example \ref{PnPrVM01} and a map $F$ be defined by 
\[
    F(\varphi) = (\varphi^1 /d_1, \varphi^2 /d_2, \varphi^3 /d_3, \varphi^4 /d_4), \hspace{0.1cm} \varphi \in \Phi_{V_\Gamma}, \hspace{0.1cm} \text{and   } d_1, d_2, d_3, d_4 \in [1, \infty).
\]

If, for all $\varphi = (\varphi^1, \varphi^2, \varphi^3, \varphi^4) \in \Phi_{V_{\Gamma}}$ and $g \in G$, we have $F(\varphi \circ g) = F(\varphi) \circ g$, then
\[\left(\frac{\varphi^1}{d_1}, \frac{\varphi^4}{ d_2}, \frac{\varphi^3}{d_3}, \frac{\varphi^2}{d_4}\right) = \left(\frac{\varphi^1}{d_1}, \frac{\varphi^4}{d_4}, \frac{\varphi^3}{d_3}, \frac{\varphi^2}{d_2}\right)\]
whence $d_2 = d_4$; and the requirement that $F(\varphi) \in \Phi_{V_{\Gamma}}$ entails $d_1 = d_3$. 

Moreover, 
\[
    \| F(\varphi_1) - F(\varphi_2) \|_\infty 
     \leq \frac{1}{\min\{d_1, d_2\}} \| \varphi_1 - \varphi_2 \|_\infty \le \| \varphi_1 - \varphi_2 \|_\infty
\]
for all $\varphi_1 = (\varphi^1_1, \varphi^2_1, \varphi^3_1, \varphi^4_1), \ \varphi_2 = (\varphi^1_2, \varphi^2_2, \varphi^3_2, \varphi^4_2) \in \Phi_{V_{\Gamma}}$, whence $F$ is non-expansive.

Therefore, the map $F$ defined above is a $\mathrm{vw}$-graph GENEO if and only if $d_1 = d_3$ and $d_2 = d_4$.
\end{example}

We now prepare for the first instances of graph permutants in Examples~\ref{PermVM01} and~\ref{PermVM02}.

\begin{example}\label{PnPrVM04}
Let $\Gamma = (V_\Gamma, E_\Gamma, \psi_\Gamma)$ be the cycle graph $C_4$ with $V_\Gamma = \{ A, B, C, D \}$. Its automorphism group is given by 
\[
    \mathrm{Aut}(V_\Gamma) = \{  \mathrm{id}_{\mathbb{N}_4}, \alpha = (A, B, C, D), \alpha^2, \alpha^3, (A, C),  (B, D), (A, B)(C, D), (A, D)(B, C) \}
\]
and \[ G = \{ \mathrm{id}_{\mathbb{N}_4}, \alpha, \alpha^2, \alpha^3 \} \] is a subgroup of $\mathrm{Aut}(V_\Gamma)$.

If $\Phi_{V_{\Gamma}}$ is the same as in Example \ref{PnPrVM01}, then $(\Phi_{V_{\Gamma}}, G)$ is not a $\mathrm{vw}$-graph perception pair. However, if we define 
\[
    \Phi_{V_{\Gamma}} = \{ \varphi = (\varphi^1, \varphi^2, \varphi^3, \varphi^4) \in \mathbb{R}^4 \mid \varphi^1 + \varphi^3 = 0 = \varphi^2 + \varphi^4 \}
\]
then $(\Phi_{V_{\Gamma}}, \mathrm{Aut}(V_\Gamma))$, and therefore $(\Phi_{V_{\Gamma}}, G)$, are $\mathrm{vw}$-graph perception pairs.
\end{example}

\begin{example}\label{PnPrVM06}
Let $G$ be as in Example \ref{PnPrVM04} and \[ \Phi_{V_{\Gamma}} = \{ \varphi \in \mathbb{R}^4 \mid \|\varphi\|_{\infty} \leq 1 \}. \] Then $(\Phi_{V_{\Gamma}}, G)$ is a $\mathrm{vw}$-graph perception pair.
\end{example}

\begin{example}\label{PermVM01}
Let $G$ be as in Example \ref{PnPrVM04} and 
\[
H = \{ h_1 = (A, B)(C, D), h_2 = (A, D)(B, C) \} \subseteq \mathrm{Aut}({V_{\Gamma}}).
\]
Then $H$ is a $\mathrm{vw}$-graph permutant for $T = \mathrm{id}_G$.
\end{example}

\begin{example}\label{PermVM02}
Let $\Gamma$ be as in Example \ref{PnPrVM04} and \[ G = \{ \mathrm{id}_{\mathbb{N}_4}, \alpha^2, (A, B)(C, D), (A, D)(B, C) \} \] be the Klein 4-group contained in $\mathrm{Aut}({V_{\Gamma}})$. If \[ H = \{ (A, C), (B, D) \} \subseteq \mathrm{Aut}({V_{\Gamma}}) \] then $H$ is a $\mathrm{vw}$-graph permutant for $T = \mathrm{id}_G$.
\end{example}

As usual, in the following we will denote by $K_n$ the complete graph on $n$ vertices.

\begin{proposition}\label{SwapGrPermVM}
Let $\Gamma := K_n$ and $H \subseteq G = \mathrm{Aut}({V_{\Gamma}}) \cong S_n$ be the set of all transpositions of $V_\Gamma 
$. Then $H$ is a $\mathrm{vw}$-graph permutant for $T = \mathrm{id}_G$.
\end{proposition}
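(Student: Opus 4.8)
The plan is to reduce the claim to the classical fact that conjugation in the symmetric group preserves cycle type. First I would observe that, since $T = \mathrm{id}_G$, the defining condition of Definition~\ref{DefGrPermVM} specializes to $\alpha_g(f) = g \circ f \circ g^{-1} \in H$ for every $f \in H$ and every $g \in G = \mathrm{Aut}(V_\Gamma) \cong S_n$; that is, $H$ must be closed under the conjugation action of the full symmetric group on $V_\Gamma$. Since every transposition is in particular a function $V_\Gamma \to V_\Gamma$, we have $H \subseteq V_\Gamma^{V_\Gamma}$, so the setup of Definition~\ref{DefGrPermVM} applies and it remains only to check the closure property.

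The key step is the conjugation formula for cycles: for any $g \in S_n$ and any transposition $\tau = (A, B)$, one has
\[ g \circ \tau \circ g^{-1} = (g(A), g(B)). \]
I would verify this directly by evaluating the left-hand side on an arbitrary vertex $x = g(c)$: the composite maps $g(A) \mapsto g(B)$, maps $g(B) \mapsto g(A)$, and fixes $g(c)$ for $c \notin \{A, B\}$, which is exactly the action of $(g(A), g(B))$. Because $g$ is a bijection and $A \neq B$, the images satisfy $g(A) \neq g(B)$, so $(g(A), g(B))$ is again a genuine transposition and therefore lies in $H$.

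Combining these observations, $\alpha_g(\tau) = g \circ \tau \circ g^{-1} \in H$ for every transposition $\tau \in H$ and every $g \in G$, whence $\alpha_g(H) \subseteq H$ for all $g \in G$. By Definition~\ref{DefGrPermVM}, $H$ is a $\mathrm{vw}$-graph permutant for $T = \mathrm{id}_G$, as required. There is no substantial obstacle in this argument; the only point requiring minor care is ensuring that the conjugate of a transposition does not degenerate into the identity, which is settled by the injectivity of $g$.
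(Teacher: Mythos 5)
Your proposal is correct and follows essentially the same route as the paper's proof: both verify pointwise that $g \circ (A,B) \circ g^{-1}$ acts as the transposition $(g(A), g(B))$, fixing all other vertices. Your added remark that injectivity of $g$ guarantees $g(A) \neq g(B)$ is a minor detail the paper leaves implicit, but the argument is the same.
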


\begin{proof}
Let $f \in H$ and $g \in G$; we show that $g \circ f \circ g^{-1} \in H$. Let us put $f := (A, B)$ for some $A, B \in V_\Gamma$, $C := g(A)$ and $D := g(B)$. Then \[ g \circ f \circ g^{-1}(C) = g \circ f (A) = g(B) = D \] \[ g \circ f \circ g^{-1}(D) = g \circ f (B) = g(A) = C. \]

While if $L \in V_\Gamma$ is different from both $C$ and $D$, then as $g$ is bijective, $g^{-1}(L) \neq g^{-1}(C) = A$ and $g^{-1}(L) \neq g^{-1}(D) = B$. We thus have \[ g \circ f \circ g^{-1}(L) = g \circ g^{-1}(L) = L \] whence $g \circ f \circ g^{-1} = (C, D) \in H$, as required.
\end{proof}

As stated in Theorem \ref{GenPermGOTop}, the concept of a $\mathrm{vw}$-graph permutant can be used to define $\mathrm{vw}$-graph GENEOs.

\begin{example}
Let $(\Phi_{V_{\Gamma}}, G)$ be the same as in Example \ref{PnPrVM06} and $H$ be the same as in Example \ref{PermVM01}. Set $F(\varphi) = \frac{1}{2} ( \varphi \circ h_1 + \varphi \circ h_2 )$. Then $F(\Phi_{V_{\Gamma}}) \subseteq \Phi_{V_{\Gamma}}$; therefore by Theorem \ref{GenPermGOTop}, $F$ is a $\mathrm{vw}$-graph GENEO.
\end{example}

\subsection{GENEOs on graphs weighted on edges}\label{EdgeModel}

The concepts of perception pair, GEO/GENEO, and (generalized) permutant can be applied to $\mathrm{ew}$-graphs as well.

\begin{definition}\label{DefGrPercPrEM}
Let $\Phi_{E_{\Gamma}}$ be a set of functions from $E_\Gamma$ to $\mathbb{R}$ and $G$ be a subgroup of $\mathrm{Aut}({E_{\Gamma}})$.
If $(\Phi_{E_{\Gamma}}, G)$ is a perception pair, we will call it an $\mathrm{ew}$-\textit{graph perception pair for} $\Gamma = (V_\Gamma, E_\Gamma, \psi_{\Gamma})$, and will write $\mathrm{dom}(\Phi_{E_{\Gamma}}) = E_\Gamma$.
\end{definition}

\begin{definition}\label{DefGrGOEM}
Let $(\Phi_{E_{\Gamma_1}}, G_1)$ and $(\Phi_{E_{\Gamma_2}}, G_2)$ be two $\mathrm{ew}$-graph perception pairs and $T : G_1 \to G_2$ be a group homomorphism. If $F : \Phi_{E_{\Gamma_1}} \to \Phi_{E_{\Gamma_2}}$ is a {GEO} (resp. {GENEO}) from $(\Phi_{E_{\Gamma_1}}, G_1)$ to $(\Phi_{E_{\Gamma_2}}, G_2)$ with respect to $T$, we will say that $F$ is an $\mathrm{ew}$-\emph{graph GEO} (resp. $\mathrm{ew}$-\emph{graph GENEO}).
\end{definition}

\begin{definition}\label{DefGrPermEM}
Let $(\Phi_{E_{\Gamma_1}}, G_1)$ and $(\Phi_{E_{\Gamma_2}}, G_2)$ be two $\mathrm{ew}$-graph perception pairs and $T : G_1 \to G_2$ be a group homomorphism.
We say that $H\subseteq {E_{\Gamma_1}}^{E_{\Gamma_2}}$ is an $\mathrm{ew}$-\textit{graph permutant for $T$} if $\alpha_g (H) \subseteq H$ for every $g \in G_1$; that is, $\alpha_g (f) = g \circ f \circ T(g^{-1}) \in H$ for every $f \in H$ and $g \in G_1$.
\end{definition}

The group $\mathrm{Aut}(\Gamma)$ of all graph automorphisms of a graph $\Gamma$ induces a particular subgroup $\mathrm{Aut}({E_\Gamma})$ of the group $S_{m}$ of all permutations of $E_\Gamma$. The elements of $\mathrm{Aut}({E_\Gamma})$ can be considered to be those permutations of $E_\Gamma$ that directly correspond to the permutations of $V_\Gamma$ defining all graph automorphisms of $\Gamma$.

If $\Gamma = K_n$, the group $\mathrm{Aut}(\Gamma)$ is isomorphic to $S_n$, and we have \[  S_n \cong \mathrm{Aut}({V_\Gamma}) \cong \mathrm{Aut}({E_\Gamma}) \subseteq S_m. \] Therefore, we will consider $\mathrm{Aut}(\Gamma)$ and $\mathrm{Aut}({E_\Gamma})$ to be the same in this case.

Let us consider some examples of perception pairs and GENEOs in the context of $\mathrm{ew}$-graphs.

\begin{example}\label{PnPrEM01}
Let $\Gamma = K_4 = (V_\Gamma, E_\Gamma, \psi_\Gamma)$ with 
\[
  V_{K_4} = \{A, B, C, D \} \]
\[  E_{K_4} = \big\{  p = \{A, B\}, q = \{B, C\}, r = \{A, C\},  s = \{A, D\}, t = \{B, D\}, u = \{C, D\} \big\}
\]
(see Fig.~\ref{lettereK4}), and consider the group $G = \{ \mathrm{id}_{E_\Gamma}, \hspace{0.1cm} \delta = (r \hspace{0.1cm} s)(q  \hspace{0.1cm} t) \} \subseteq \mathrm{Aut}({E_\Gamma})$ together with the space $\Phi_{E_\Gamma} = \{ \varphi = (\varphi^1, \varphi^2, \varphi^3, \varphi^4, \varphi^5, \varphi^6) \mid \varphi^1 + \varphi^6 = 0 \} \subseteq \mathbb{R}^m$ of the functions with opposite values on the two edges fixed by the elements of $G$. Clearly, $\varphi \circ \delta \in \Phi_{E_\Gamma}$, and $(\Phi_{E_\Gamma}, G)$ is an $\mathrm{ew}$-graph perception pair.
\end{example}

\begin{figure}[ht]
\centering
\includegraphics[width=1.3in]{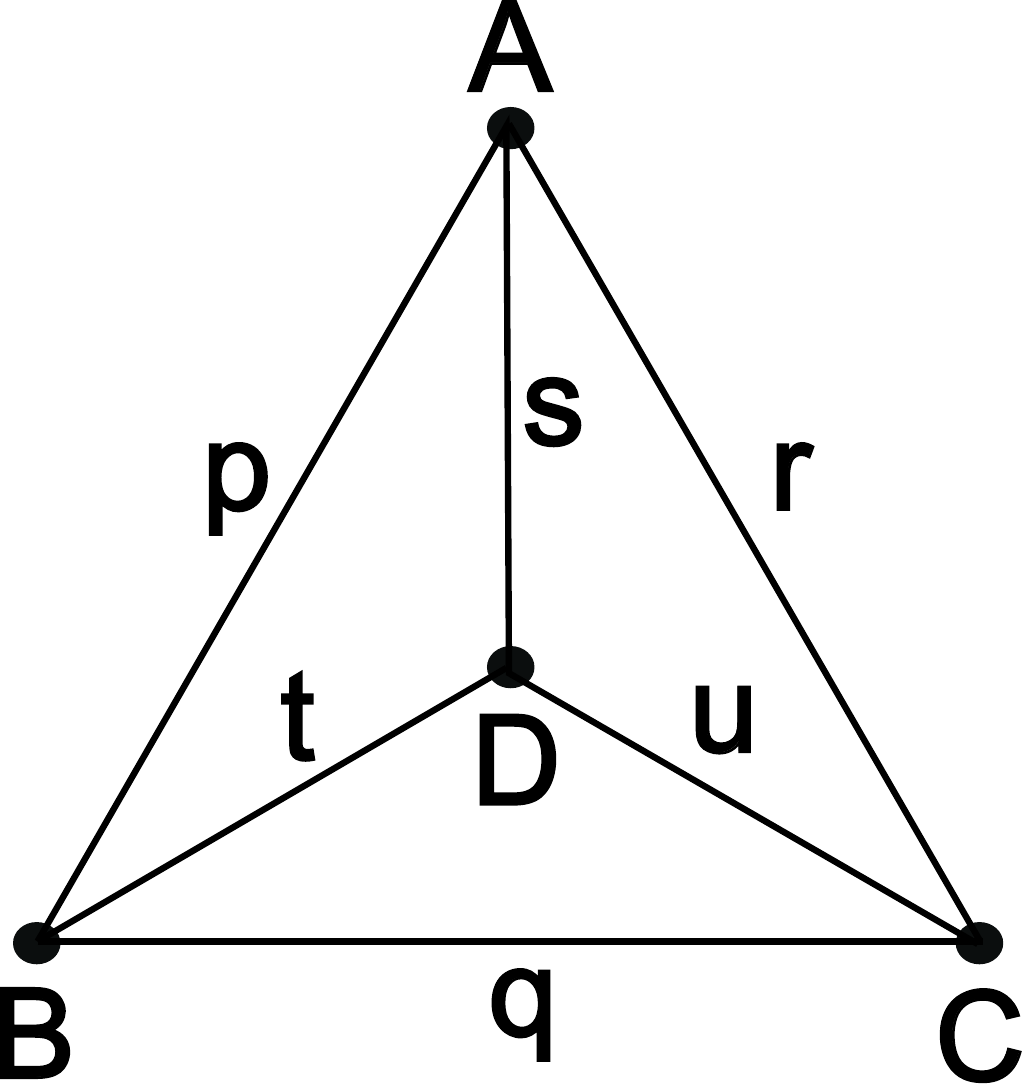}
\caption{The complete graph $K_4$.}
\label{lettereK4}
\end{figure}

\begin{example}\label{GOEM01}
Let $(\Phi_{E_\Gamma}, G)$ be as in Example \ref{PnPrEM01} and consider the map $F$ defined by 
\begin{align*}
   F(\varphi) := (& \varphi^1/d_1, \varphi^2/d_2, \varphi^3/d_3, \varphi^4/d_4, \varphi^5/d_5, \varphi^6/d_6), \\
   & \varphi \in \Phi_{E_\Gamma}, \hspace{0.1cm} \text{and} \hspace{0.1cm} d_i \geq 1, \hspace{0.1cm} \forall i \in \mathbb{N}_6.
\end{align*}

In order that $F(\varphi) \in \Phi_{E_\Gamma}$ we should have $d_1 = d_6$, and the requirement that $F$ be equivariant with respect to $G$ entails $d_3 = d_4$ and $d_2 = d_5$.

Moreover, a simple computation shows that 
\begin{align*}
   \| F(\varphi_1) - F(\varphi_2) \|_\infty 
   & \leq \frac{1}{\min\{d_1, d_2, d_3\}} \| \varphi_1 - \varphi_2 \|_\infty \\  & \le \| \varphi_1 - \varphi_2 \|_\infty
\end{align*}
for all $\varphi_1 = (\varphi^i_1 \hspace{0.1cm} / \hspace{0.1cm} i \in \mathbb{N}_6), \varphi_2 = (\varphi^i_2 \hspace{0.1cm} / \hspace{0.1cm} i \in \mathbb{N}_6) \in \Phi_{E_\Gamma}$, whence $F$ is non-expansive.

Therefore, the map $F$ defined above is an $\mathrm{ew}$-graph GENEO if and only if $d_1 = d_6, d_2 = d_5$, and $d_3 = d_4$.
\end{example}

\section{Experiments}\label{Experiments}

We illustrate the model of Section \ref{EdgeModel} and show how graph GENEOs allow us to extract useful information from graphs. This can be done by ``smart forgetting'' of differences: by some sort of average, but keeping the same dimension of the space of functions (as in Sect.~\ref{subK4}) or by dimension reduction (as in Sect.~\ref{cycles}).

\subsection{Subgraphs of $K_4$}\label{subK4}

The choice of a permutant determines how different functions are mapped to the same ``signature'' by the corresponding GENEO. In this subsection, we consider functions on the edge set of a complete graph $K_n$, taking values that are either 0 or 1; this means that each such a function identifies a subgraph of $K_n$. A GENEO will, in general, produce functions that can have any real value, so not representing subgraphs anymore. With the aim of getting equal results for ``similar'' subgraphs, we have chosen as a permutant the set of edge permutations produced by swapping two vertices in any possible way.

Let $\Gamma$ be the complete graph $K_4$ (Fig.~\ref{lettereK4}) with $\Phi_{E_\Gamma} := \mathbb{R}^6$. We have \[ S_4 \cong \mathrm{Aut}(K_4) \cong \mathrm{Aut}(E_{K_4}) \subseteq S_6. \] 
The subset 
\[
  H := \{(q,r)(s,t), (p,q)(s,u), (p,t)(r,u), (p,r)(t,u), (p,s)(q,u), (q,t)(r,s)\}
\]
of $G = \mathrm{Aut}(E_{K_4})$ consisting of permutations of $E_{K_4}$ induced by all transpositions of $V_{K_4}$ is an $\mathrm{ew}$-graph permutant for $T = \mathrm{id}_G$. Therefore, the operator $F \colon \mathbb{R}^6 \to \mathbb{R}^6$ defined by
\[ F(\varphi) := \frac{1}{6} \sum_{h \in H} \varphi \circ h \]
is an $\mathrm{ew}$-graph GENEO.

\begin{figure}[ht]
\centering
\includegraphics[width=6in]{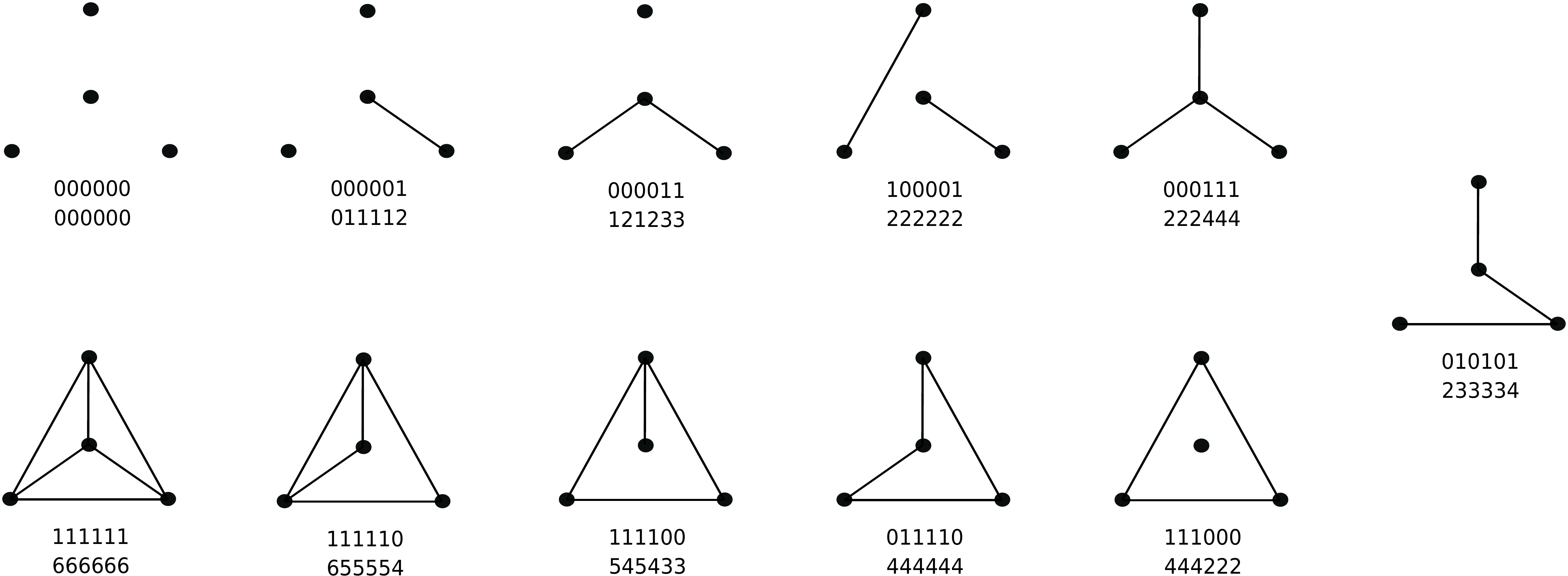}
\caption{The subgraphs of $K_4$ up to isomorphisms, the 6-tuples representing each of them (above) and their $F_4$-codes (multiplied by 6, below).}
\label{K4}
\end{figure}

Subgraphs of $K_4$ can be represented by elements of
\[\Phi_4 := \big\{ \varphi = (\varphi^1, \cdots, \varphi^6) \in \Phi_{E_{K_4}} \mid \varphi^r \in \{0, 1\}, r \in \mathbb{N}_6 \big\} \]
and the restriction $F_4$ of $F$ to $\Phi_4 \subseteq \Phi_{E_{K_4}}$ can be used to draw meaningful comparisons between them (see Fig.~\ref{K4}).

\begin{definition}
We say that the image $F_4(\varphi)$ is an $F_4-$\textit{code} for the subgraph $\varphi \in \Phi_4$ of $K_4$.
\end{definition}

\begin{definition}
We say that an $F_4-$\textit{code} $c_1$ is $F_4-$\textit{equivalent} to an $F_4-$\textit{code} $c_2$, and write $c_1 \sim_4 c_2$, if $c_2$ is the result of a permutation of $c_1$.
\end{definition}

Clearly, $\sim_4$ is an equivalence relation.

\begin{definition}
We say that $\varphi^{'} := (\varphi^6, \cdots, \varphi^1)$ is the \textit{reversal} of $\varphi := (\varphi^1, \cdots, \varphi^6) \in \Phi_{E_{K_4}}$.
\end{definition}

\begin{definition}
Let $\varphi_1, \varphi_2 \in \Phi_{E_{K_4}}$. We say that $\varphi_1$ and $\varphi_2$ are complementary if $\varphi_1 + \varphi_2 = (1, \cdots, 1)$
\end{definition}

We wrote a simple program to compute all $F_4$-codes and found that
\begin{enumerate}
    \item Naturally enough, isomorphic subgraphs have $F_4$-equivalent codes. Therefore, in some cases, it suffices to consider only the 11 non-isomorphic subgraphs of $K_4$.
    \item Complementary subgraphs have complementary codes.
    \item There is only one case, upto graph isomorphisms, in which non-isomorphic subgraphs of $K_4$ have $F_4$-equivalent codes: $\varphi_1 := (1,1,1,0,0,0)$ and $\varphi_2 := (0,0,0,1,1,1)$ with $F_4(\varphi_1) := (4,4,4,2,2,2)/6$ and $F_4(\varphi_2) := (2,2,2,4,4,4)/6$. In this case, the graphs are complementary as well, which explains why we have equivalent codes despite the graphs being non-isomorphic. Moreover, $\varphi_1$ and $\varphi_2$ are reversals of each other, and so are the corresponding codes.
    \item If $\varphi_1 \in \Phi_4$ is a reversal of $\varphi_2 \in \Phi_4$, then $F_4(\varphi_1)$ is a reversal of $F_4(\varphi_2)$.
\end{enumerate}

We wrote a program to compute $F_5-$codes for the 34 non-isomorphic subgraphs of $K_5$ as well and found that they were never $F_5-$equivalent. A similar statement holds for the complete graph $K_3$.

\subsection{Graph GENEOs for $C_6$ and $C_3$}\label{cycles}

A more drastic way of quotienting differences away is dimension reduction of the space of functions. In this subsection, we use the generalized notion of permutant (Sect.~\ref{GPSTS}), by mapping the edges of a small, auxiliary graph to the edges of the graph of interest. Note that we have great freedom, in that we are not bound to stick to graph homomorphisms.

Let $X := (V_X, E_X, \psi_X)$ be the cycle graph $C_6$ (see Fig.~\ref{C6C3}) with

\[ V_X := \{ A, B, C, D, E, F \}, \] 
\[
   E_X := \{ a = \{A,B\}, b = \{B,C\}, c = \{C,D\},  d = \{D,E\}, e = \{E,F\}, f = \{A,F\} \}
\]
and $Y := (V_Y, E_Y, \psi_Y)$ be the cycle graph $C_3$ with \[ V_Y := \{ G, H, I \}, \] \[ E_Y := \{ g = \{G,H\}, h = \{H,I\}, i = \{G,I\} \}. \]

Their automorphisms groups respectively are the dihedral groups \[ D_6 := \{ \alpha, \beta \mid \alpha^6 = \beta^2 = (\beta \alpha)^2 = 1 \}, \] \[ D_3 := \{ \gamma, \delta \mid \gamma^3 = \delta^2 = (\delta \gamma)^2 = 1 \}, \] where $\alpha := (a,b,c,d,e,f), \beta := (a,f)(b,e)(c,d), \gamma := (g,h,i)$, and $\delta := (g,i)$.

Let us put $\Phi_{E_X} := \mathbb{R}^6, \ \Phi_{E_Y} := \mathbb{R}^3$; also put $G := \mathrm{Aut}(E_X) = D_6$ and $K := \mathrm{Aut}(E_Y) = D_3$ and consider the group homomorphism $T : G \to K$ given by $T(\alpha) := \gamma$ and $T(\beta) := \delta$.

There are 216 functions $p : E_Y \to E_X$ and the equivalence class of each is an $\mathrm{ew}$-graph permutant $H_p$ (Sect.~\ref{equiv}). For the sake of conciseness, we will denote the function $p := \{ (g\mapsto e_1), (h\mapsto e_2), (i\mapsto e_3) \}$ simply by $p := \mathrm{e_1 e_2 e_3}$. For example, $p := \{ (g\mapsto c), (h\mapsto a), (i\mapsto f) \}$ will be written as $p := \mathrm{caf}$.

The $\mathrm{ew}$-graph permutants $H_p, p \in E_X^{E_Y}$ are of four possible sizes:

\begin{enumerate}
    \item There is only 1  $\mathrm{ew}$-graph permutant with 2 elements. It corresponds to the function $\mathrm{aec}$.
    \item There is only 1  $\mathrm{ew}$-graph permutant with 4 elements. It is induced by $\mathrm{bfd}$.
    \item There are 5 $\mathrm{ew}$-graph permutants with 6 elements each that correspond to the functions $\mathrm{aaa}, \mathrm{abc}, \mathrm{ace}, \mathrm{add}$, and $\mathrm{afb}$.
    \item There are 15 $\mathrm{ew}$-graph permutants with 12 elements each that correspond to the functions $\mathrm{aab}$, $\mathrm{aac}$, $\mathrm{aad}$, $\mathrm{aae}$, $\mathrm{aaf}$, $\mathrm{abd}$, $\mathrm{acb}$, $\mathrm{acd}$, $\mathrm{adb}$, $\mathrm{adc}$, $\mathrm{baa}$, $\mathrm{bad}$, $\mathrm{bca}$, $\mathrm{bce}$, and $\mathrm{bdb}$. 
\end{enumerate}

\begin{figure}[ht]
\centering
\includegraphics[width=2.4in]{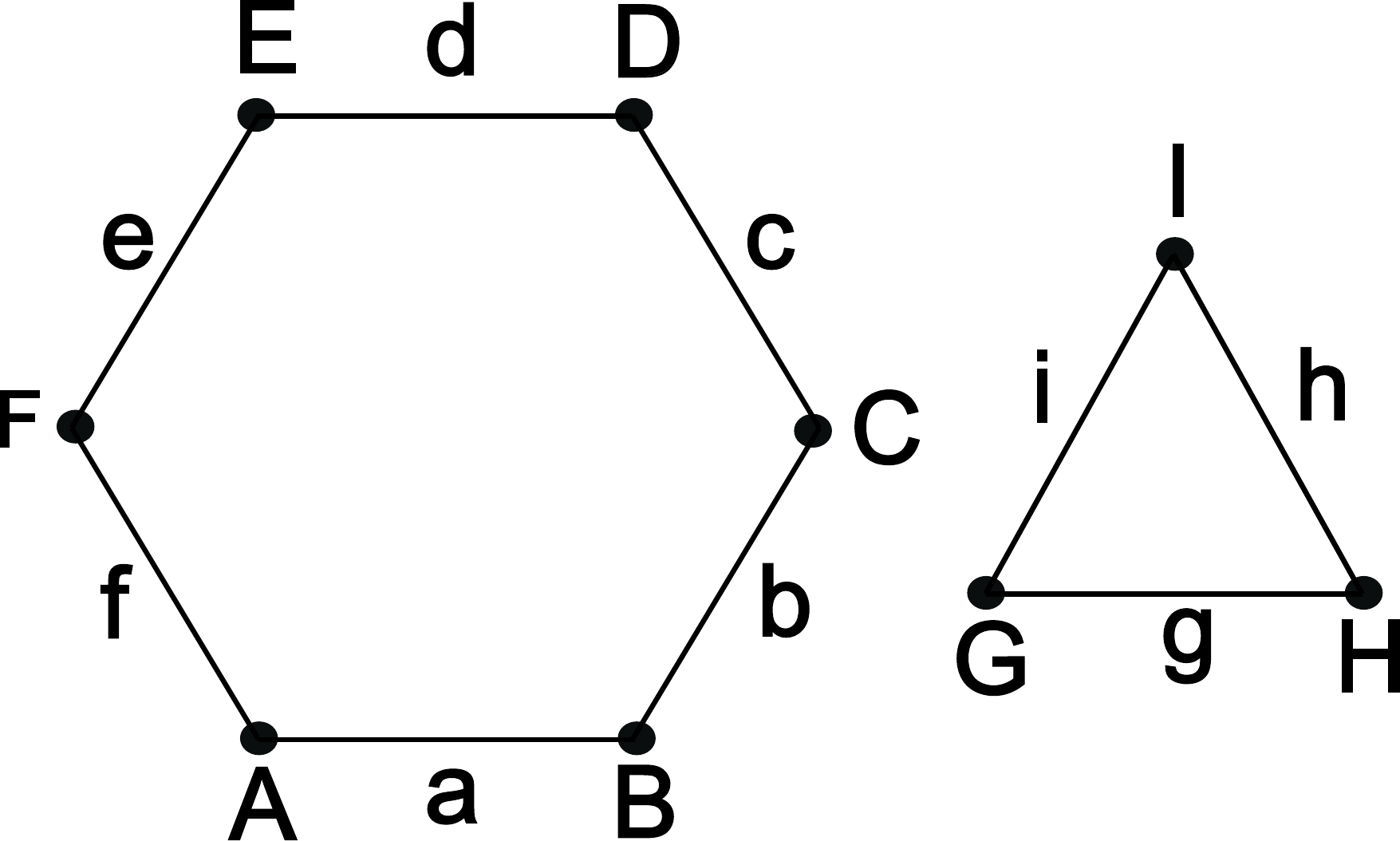}
\caption{The cycle graphs $C_6$ and $C_3$.}
\label{C6C3}
\end{figure}

Considering only the weights in $\{0, 1\}$, we wrote programs for computing the $\mathrm{ew}$-graph GENEOs corresponding to the functions $\mathrm{aec}$ and $\mathrm{bfd}$. Similar computations can be made for the rest of the functions listed above. This detailed analysis on particular functions raised a number of questions and conjectures that we plan to study in the near future.

\section{Discussion}
\label{Dis}
GENEOs represent the possibility to embed observers into Machine Learning processes, one step towards Explainable AI~\cite{Rudin2019,RoBoDuGa20,longo2020explainable}. On the other hand, they are precisely defined mathematical tools; as such, they were first conceived in a topological setting for TDA. In the same environment, permutants turn out to be effective gears for the production of GENEOs. The ever-growing use of graphs in data representation and Geometrical Deep Learning \cite{hu2020open,latouche2015graphs,makarov2021survey,li2022multiphysical,BBLSV17} motivated the present extension to the graph-theoretical domain, mirroring an analogous generalization already carried out in Topological Persistence \cite{bergomi2021beyond,bergomi2022steady}.

The extension to graphs was performed here on two different lines: the first sees data as functions defined on the vertices of a graph; this is perhaps the most common use of graphs in ML. The second development line deals with functions defined on the graph edges; this is best suited for processing filtered graphs and for comparing different graphs with the same vertex set. In both lines, we applied a generalized definition of permutant, introduced in this same paper.

The examples studied here are just meant to show some realizations of the introduced abstract concepts: our goal was mainly to establish a solid mathematical background. Extensions to digraphs and possibly hypergraphs will follow. We are confident that experimenters, interested in concrete applied problems, will make good use of the flexibility and modularity of the theory, for translating their viewpoints and biases into GENEOs and permutants also in the graph-theoretical setting.

The C++ programs used here are available at the repository
\href{https://gitlab.com/patrizio.frosini/graph-geneos}{https://gitlab.com/patrizio.frosini/graph-geneos}.

\section*{Acknowledgement}
This research has been partially supported by INdAM-GNSAGA.  




\bibliographystyle{spmpsci}
\bibliography{Gong}

\begin{thebibliography}{10}
\providecommand{\url}[1]{{#1}}
\providecommand{\urlprefix}{URL }
\expandafter\ifx\csname urlstyle\endcsname\relax
  \providecommand{\doi}[1]{DOI~\discretionary{}{}{}#1}\else
  \providecommand{\doi}{DOI~\discretionary{}{}{}\begingroup
  \urlstyle{rm}\Url}\fi

\bibitem{AERP19}
Anselmi, F., Evangelopoulos, G., Rosasco, L., Poggio, T.: Symmetry-adapted
  representation learning.
\newblock Pattern Recognition \textbf{86}, 201 -- 208 (2019).
\newblock \doi{https://doi.org/10.1016/j.patcog.2018.07.025}.
\newblock
  \urlprefix\url{http://www.sciencedirect.com/science/article/pii/S0031320318302620}

\bibitem{AnRoPo16}
Anselmi, F., Rosasco, L., Poggio, T.: {On invariance and selectivity in
  representation learning}.
\newblock Information and Inference: A Journal of the IMA \textbf{5}(2),
  134--158 (2016).
\newblock \doi{10.1093/imaiai/iaw009}.
\newblock \urlprefix\url{http://dx.doi.org/10.1093/imaiai/iaw009}

\bibitem{Bengio2013}
Bengio, Y., Courville, A., Vincent, P.: Representation learning: A review and
  new perspectives.
\newblock IEEE Trans. Pattern Anal. Mach. Intell. \textbf{35}(8), 1798--1828
  (2013).
\newblock \doi{10.1109/TPAMI.2013.50}.
\newblock \urlprefix\url{http://dx.doi.org/10.1109/TPAMI.2013.50}

\bibitem{bergomi2022steady}
Bergomi, M.G., Ferri, M., Tavaglione, A.: Steady and ranging sets in graph
  persistence.
\newblock Journal of Applied and Computational Topology  (2022).
\newblock \doi{10.1007/s41468-022-00099-1}.
\newblock \urlprefix\url{https://doi.org/10.1007/s41468-022-00099-1}

\bibitem{bergomi2021beyond}
Bergomi, M.G., Ferri, M., Vertechi, P., Zuffi, L.: Beyond topological
  persistence: Starting from networks.
\newblock Mathematics \textbf{9}(23), 3079 (2021)

\bibitem{BFGQ19}
{Bergomi}, M.G., {Frosini}, P., {Giorgi}, D., {Quercioli}, N.: {Towards a
  topological–geometrical theory of group equivariant non-expansive operators
  for data analysis and machine learning}.
\newblock Nature Machine Intelligence \textbf{1}, 423--433 (2019).
\newblock \urlprefix\url{http://vcg.isti.cnr.it/Publications/2019/BFGQ19}

\bibitem{BBBFQ20}
Bocchi, G., Botteghi, S., Brasini, M., Frosini, P., Quercioli, N.: On the
  finite representation of group equivariant operators via permutant measures.
\newblock arXiv:2008.06340  (2020).
\newblock \doi{10.48550/ARXIV.2008.06340}.
\newblock \urlprefix\url{https://arxiv.org/abs/2008.06340}

\bibitem{bondy1976graph}
Bondy, J.A., Murty, U.S.R.: Graph theory with applications, vol. 290.
\newblock Macmillan London (1976)

\bibitem{bronstein_2022}
Bronstein, M.: Geometric foundations of deep learning (2022).
\newblock
  \urlprefix\url{https://towardsdatascience.com/geometric-foundations-of-deep-learning-94cdd45b451d}

\bibitem{BrBrCoVe21}
Bronstein, M.M., Bruna, J., Cohen, T., Veličković, P.: Geometric deep
  learning: Grids, groups, graphs, geodesics, and gauges (2021).
\newblock \doi{10.48550/ARXIV.2104.13478}.
\newblock \urlprefix\url{https://arxiv.org/abs/2104.13478}

\bibitem{BBLSV17}
Bronstein, M.M., Bruna, J., LeCun, Y., Szlam, A., Vandergheynst, P.: Geometric
  deep learning: Going beyond euclidean data.
\newblock IEEE Signal Processing Magazine \textbf{34}(4), 18--42 (2017).
\newblock \doi{10.1109/MSP.2017.2693418}

\bibitem{Car09}
Carlsson, G.: Topology and data.
\newblock Bull. Amer. Math. Soc. \textbf{46}(2), 255--308 (2009).
\newblock \doi{10.1090/S0273-0979-09-01249-X}

\bibitem{cohen2016group}
Cohen, T., Welling, M.: {Group equivariant convolutional networks}.
\newblock In: International conference on machine learning, pp. 2990--2999
  (2016)

\bibitem{CoEdHa07}
Cohen-Steiner, D., Edelsbrunner, H., Harer, J.L.: Stability of persistence
  diagrams.
\newblock Discrete {\&} Computational Geometry \textbf{37}(1), 103--120 (2007).
\newblock \doi{10.1007/s00454-006-1276-5}

\bibitem{CoFrQu22}
Conti, F., Frosini, P., Quercioli, N.: On the construction of group equivariant
  non-expansive operators via permutants and symmetric functions.
\newblock Frontiers in Artificial Intelligence \textbf{5} (2022).
\newblock \doi{10.3389/frai.2022.786091}.
\newblock
  \urlprefix\url{https://www.frontiersin.org/article/10.3389/frai.2022.786091}

\bibitem{EH10}
Edelsbrunner, H., Harer, J.L.: Computational Topology. An introduction.
\newblock American Mathematical Society, Providence, RI, USA (2010)

\bibitem{EdLeZo02}
Edelsbrunner, H., Letscher, D., Zomorodian, A.: Topological persistence and
  simplification.
\newblock Discrete {\&} Computational Geometry \textbf{28}(4), 511--533 (2002).
\newblock \doi{10.1007/s00454-002-2885-2}

\bibitem{EdMo13}
Edelsbrunner, H., Morozov, D.: Persistent homology: theory and practice.
\newblock In: European {C}ongress of {M}athematics, pp. 31--50. Eur. Math.
  Soc., Z\"urich (2013)

\bibitem{hu2020open}
Hu, W., Fey, M., Zitnik, M., Dong, Y., Ren, H., Liu, B., Catasta, M., Leskovec,
  J.: Open graph benchmark: Datasets for machine learning on graphs.
\newblock Advances in neural information processing systems \textbf{33},
  22118--22133 (2020)

\bibitem{pmlr-v80-kondor18a}
Kondor, R., Trivedi, S.: On the generalization of equivariance and convolution
  in neural networks to the action of compact groups.
\newblock In: J.~Dy, A.~Krause (eds.) Proceedings of the 35th International
  Conference on Machine Learning, \emph{Proceedings of Machine Learning
  Research}, vol.~80, pp. 2747--2755. PMLR (2018).
\newblock \urlprefix\url{https://proceedings.mlr.press/v80/kondor18a.html}

\bibitem{latouche2015graphs}
Latouche, P., Rossi, F.: Graphs in machine learning: an introduction.
\newblock In: European Symposium on Artificial Neural Networks, Computational
  Intelligence and Machine Learning (ESANN), Proceedings of the 23-th European
  Symposium on Artificial Neural Networks, Computational Intelligence and
  Machine Learning (ESANN 2015), pp. 207--218 (2015)

\bibitem{li2022multiphysical}
Li, X.S., Liu, X., Lu, L., Hua, X.S., Chi, Y., Xia, K.: Multiphysical graph
  neural network (mp-gnn) for covid-19 drug design.
\newblock Briefings in Bioinformatics  (2022)

\bibitem{longo2020explainable}
Longo, L., Goebel, R., Lecue, F., Kieseberg, P., Holzinger, A.: Explainable
  artificial intelligence: Concepts, applications, research challenges and
  visions.
\newblock In: International Cross-Domain Conference for Machine Learning and
  Knowledge Extraction, pp. 1--16. Springer (2020)

\bibitem{makarov2021survey}
Makarov, I., Kiselev, D., Nikitinsky, N., Subelj, L.: Survey on graph
  embeddings and their applications to machine learning problems on graphs.
\newblock PeerJ Computer Science \textbf{7}, e357 (2021)

\bibitem{Ma12}
Mallat, S.: Group invariant scattering.
\newblock Communications on Pure and Applied Mathematics \textbf{65}(10),
  1331--1398 (2012).
\newblock \doi{10.1002/cpa.21413}.
\newblock
  \urlprefix\url{https://onlinelibrary.wiley.com/doi/abs/10.1002/cpa.21413}

\bibitem{Ma16}
Mallat, S.: Understanding deep convolutional networks.
\newblock Philosophical Transactions of the Royal Society A: Mathematical,
  Physical and Engineering Sciences \textbf{374}(2065), 20150203 (2016).
\newblock \doi{10.1098/rsta.2015.0203}.
\newblock
  \urlprefix\url{https://royalsocietypublishing.org/doi/abs/10.1098/rsta.2015.0203}

\bibitem{NQTh}
Quercioli, N.: On the topological theory of group equivariant non-expansive
  operators.
\newblock Ph.D. thesis, Alma Mater Studiorum - Università di Bologna, Italy
  (2021).
\newblock
  \href{http://amsdottorato.unibo.it/9770/1/Tesi-Dottorato-Quercioli.pdf}{http://amsdottorato.unibo.it/9770/1/Tesi-Dottorato-Quercioli.pdf}

\bibitem{RoBoDuGa20}
Roscher, R., Bohn, B., Duarte, M.F., Garcke, J.: Explainable machine learning
  for scientific insights and discoveries.
\newblock IEEE Access \textbf{8}, 42200--42216 (2020).
\newblock \doi{10.1109/ACCESS.2020.2976199}

\bibitem{Rudin2019}
Rudin, C.: Stop explaining black box machine learning models for high stakes
  decisions and use interpretable models instead.
\newblock Nature Machine Intelligence \textbf{1}(5), 206--215 (2019).
\newblock \doi{10.1038/s42256-019-0048-x}.
\newblock \urlprefix\url{https://doi.org/10.1038/s42256-019-0048-x}

\bibitem{worrall2017harmonic}
Worrall, D.E., Garbin, S.J., Turmukhambetov, D., Brostow, G.J.: {Harmonic
  networks: Deep translation and rotation equivariance}.
\newblock In: Proc. IEEE Conf. on Computer Vision and Pattern Recognition
  (CVPR), vol.~2 (2017)

\bibitem{ZVERP15}
Zhang, C., Voinea, S., Evangelopoulos, G., Rosasco, L., Poggio, T.:
  Discriminative template learning in group-convolutional networks for
  invariant speech representations.
\newblock In: INTERSPEECH-2015. International Speech Communication Association
  (ISCA), International Speech Communication Association (ISCA), Dresden,
  Germany (2015).
\newblock
  \urlprefix\url{http://www.isca-speech.org/archive/interspeech_2015/i15_3229.html}

\end{thebibliography}

\end{document}